\newtheorem{theorem}{Theorem}[section]
\newtheorem{lemma}[theorem]{Lemma}
\newtheorem{proposition}[theorem]{Proposition}
\newtheorem{claim}{Claim}
\newtheoremstyle{remarkk}
{3pt}
{3pt}
{}
{}
{\itshape \bfseries}
{.}
{.5em}
{}
\theoremstyle{remarkk}
\newenvironment{example}
{\pushQED{\qed}\examplex}
{\popQED\endexamplex}
\newenvironment{remark}
{\pushQED{\qed}\remarkx}
{\popQED\endremarkx}
\theoremstyle{definition}
\newtheorem{definition}[theorem]{Definition}
\numberwithin{equation}{section}
\newcommand{\Z}{\mathbb{Z}}
\newcommand{\C}{\mathbb{C}}
\newcommand{\F}{\mathbb{F}}
\newcommand{\p}{\mathfrak{p}}
\newcommand{\m}{\mathfrak{m}}
\renewcommand{\a}{\mathfrak{a}}
\newcommand{\q}{\mathfrak{q}}
\newcommand{\cc}{\mathfrak{c}}
\renewcommand{\O}{\mathcal{O}}
\renewcommand{\phi}{\varphi}
\renewcommand{\epsilon}{\varepsilon}
\newcommand{\hil}{\textup{[}}
\newcommand{\hr}{\textup{]}}
\DeclareSymbolFont{cyrletters}{OT2}{wncyr}{m}{n}
\DeclareMathSymbol{\Sha}{\mathalpha}{cyrletters}{"58}
\DeclareMathOperator{\sgn}{sgn}
\DeclareMathOperator{\rank}{rank}
\newcommand{\Leg}[2]{\left( \frac{#1}{#2}\right)}
\DeclareMathOperator{\Gal}{Gal}
\DeclareMathOperator{\ord}{ord}
\DeclareMathOperator{\id}{id}
\newcommand{\Art}[3]{ \text{Art}(#1,#2/#3) }
\renewcommand{\hat}[1]{\widehat{#1}}
\renewcommand{\mod}[1]{{\@displayfalse\pmod{#1}}}
\title{Governing fields for hyperelliptic function fields}
\author{Joppe Stokvis \thanks{Mathematical Institute, Leiden University and QuSoft, Amsterdam, \href{mailto: j.a.stokvis@math.leidenuniv.nl}{j.a.stokvis@math.leidenuniv.nl}}}
\begin{document}

\maketitle

\begin{abstract}
	We study the $8$-rank of class groups of hyperelliptic function fields and show that such $8$-ranks are governed by splitting conditions in so-called governing fields. A similar result was proven for quadratic number fields by Stevenhagen, who used a theory of R\'edei symbols and R\'edei reciprocity to do so. We introduce a version of the R\'edei reciprocity law for function fields and use this to show existence of governing fields.
\end{abstract}

\section{Introduction}
Since Gauss introduced his genus theory \cite{Gauss}, the $2$-part of (narrow) class groups of quadratic number fields has been extensively studied. By the Cohen-Lenstra heuristics \cite{Cohen-Lenstra} we expect only this $2$-part to have a predictable structure. To understand the structure of the $2$-part, Cohn and Lagarias introduced governing fields and proved their existence for the $2$- and $4$-rank \cite{Cohn-Lagarias}. Stevenhagen gave a proof for the existence of governing fields for the $8$-rank \cite{Stevenhagen2} using class field theory. More recently, a new proof of their existence \cite{Stevenhagen1} was given by using the R\'edei symbol, which satisfies the R\'edei reciprocity law. This trilinear symbol is named after the Hungarian mathematician who was the first to give a definition \cite{Redei2}. On the existence of a governing field of the $16$-rank for quadratic number fields there is only a negative answer by Koymans and Milovic \cite{Koymans-Milovic}, under the assumption of a short character sum conjecture. 

Similarly to their number field equivalent, class groups of function fields over finite fields show the same structural behaviour in their $2$-part, as was first discovered by Artin in his thesis \cite{Artin}. Because of heuristics by Friedman and Washington \cite{Friedman-Washington} that resemble those of Cohen-Lenstra, this structural behaviour is expected to occur only in the 2-part. This motivates the introduction of governing fields for hyperelliptic function fields, which will be done in this article.

Let $q$ be an odd prime power and consider the rational function field $\F_q(x)$ over the finite field with $q$ elements. A hyperelliptic function field is a quadratic extension $K_D = \F_q(x, \sqrt{D})$ determined by a squarefree polynomial $D \in \F_q[x]$. Its ring of integers $\O_D$ is the algebraic closure of $\F_q[x]$ in $K_D$ and we define $C(D)$ (also denoted by $C$) as the ideal class group of $\O_D$.  Even though we will write $C(D)$ as a multiplicative group, we can make an identification to a vector space over $\F_2$. For $j \in \Z_{\geq1}$ the $2^j$-rank of $C(D)$ is defined as the dimension of the $\F_2$-vector space $C^{2^{j-1}}/C^{2^j}$.
\begin{equation}\label{def: 2^j-rank}
	r_{2^j} := \dim_{\F_2}\left(C^{2^{j-1}}/C^{2^j}\right) = \dim_{\F_2}\left(C[2^j]/C[2^{j-1}]\right).
\end{equation}

\begin{definition}[Governing fields]\label{def: gov field intro}
	Let $q$ be an odd prime power and $D\in \F_q[x]$ squarefree. A Galois extension $\Omega_{2^j}(D)/\F_q(x)$ is called a \emph{governing field for the $2^j$-rank of $C(DP)$} if the Frobenius conjugacy class of an unramified prime $P$ in $\Gal(\Omega_{2^j}(D)/\F_q(x))$ completely determines the $2^k$-rank for all $k\leq j$.
\end{definition}

In order to obtain results about governing fields we will need explicit methods to determine the $2^j$-rank of the class group for various $j$. Even though this text treats the calculations as a purely algebraic problem for function fields, we want to highlight that similar results can also be obtained through a geometrical interpretation. By relating the class group to the Jacobian of a hyperelliptic curve one obtains additional geometric structure to study the problem \cite{Cornelissen2, Cornelissen3}.

\subsection{Main results}

This article studies the $2^j$-rank of  hyperelliptic function fields for $j=1,2,3$ and proves the following (see also \cref{thm: gov field 8rank ffields}).
\begin{theorem}\label{thm: omega exists intro}
	Let $D \in \F_q[x]$ be a squarefree polynomial. Then $\Omega_2(D)$ and $\Omega_4(D)$ exist. If not all irreducible factors of $D$ have even degree, then $\Omega_8(D)$ exists as well.
\end{theorem}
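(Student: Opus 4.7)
The plan is to mirror Stevenhagen's argument for quadratic number fields, using the function-field version of the R\'edei symbol and reciprocity law established earlier in the paper. The central principle is that each $2^j$-rank of $C(DP)$ can be written as the $\F_2$-dimension of the kernel or cokernel of an explicit matrix whose entries are R\'edei-type symbols, and that each such symbol is a Frobenius invariant in a finite Galois extension of $\F_q(x)$ depending only on $D$. Once this translation is in place, $\Omega_{2^j}(D)$ is taken to be the compositum of all relevant extensions.

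For $\Omega_2(D)$ I would invoke the function-field analogue of genus theory: the 2-rank of $C(DP)$ is a function of the number of irreducible factors of $DP$ and, because of the special role of the infinite place, of the parities of their degrees. Both data are read off from the Frobenius class of $P$ in the compositum of $\F_q(x,\sqrt{f})$ as $f$ ranges over the irreducible factors of $D$, together with one auxiliary quadratic extension controlling the infinite prime; this compositum then serves as $\Omega_2(D)$. For $\Omega_4(D)$, after fixing a basis of $C(DP)[2]$ by divisors supported on the ramified primes, the 4-rank is controlled by a matrix of R\'edei symbols $[\alpha,\beta,\,\cdot\,]$ in which $\alpha,\beta$ range over sub-products of $D$. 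Each such symbol is the Frobenius of $P$ in a fixed cyclic degree-4 R\'edei extension attached to the pair $(\alpha,\beta)$, so adjoining all these finitely many extensions to $\Omega_2(D)$ yields $\Omega_4(D)$.

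The hard case is $\Omega_8(D)$. Here the 8-rank is computed by R\'edei symbols of the form $[\alpha(P),\beta,\gamma]$ whose first entry must be chosen to depend on $P$, because it represents an auxiliary fourth-power class whose very existence reflects the vanishing of a subsymbol tied to $P$. There is therefore no fixed R\'edei extension in which $[\alpha(P),\beta,\gamma]$ is directly a Frobenius. The key step is to invoke the R\'edei reciprocity law of the paper to swap the $P$-dependent entry with one of the fixed entries, rewriting the symbol up to an error term as $[\beta,\gamma,P]$, which \emph{is} the Frobenius of $P$ in a R\'edei extension depending only on $D$. The main obstacle, and the source of the hypothesis, is controlling this error term: the function-field reciprocity carries a correction involving degrees modulo $2$ of the arguments together with a contribution from the infinite place of $\F_q(x)$. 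When at least one irreducible factor of $D$ has odd degree, one can choose representatives so that this correction is itself detected by splitting in a fixed auxiliary finite extension, and hence absorbed into $\Omega_8(D)$. When every irreducible factor of $D$ has even degree, the infinite-place part of the correction cannot be made uniform in $P$, and the swap fails to produce a Frobenius condition depending only on $D$; this is precisely the obstruction that forces the hypothesis. Once the error term is handled, $\Omega_8(D)$ is built as the compositum of $\Omega_4(D)$ with the finitely many degree-4 R\'edei extensions arising from the swapped symbols, plus the auxiliary extension absorbing the correction.
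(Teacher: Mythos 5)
Your overall strategy matches the paper's: express the $R_8$-entries as R\'edei symbols, use reciprocity to move the $P$-dependence into the third slot, and take the compositum of the resulting fixed quartic extensions as the governing field. But two of your key claims do not match how the argument actually runs, and one of them hides a real gap. First, at the level of $\Omega_4(D)$ the relevant matrix entries are plain quadratic symbols $\Leg{A_i}{P_j}$ (the R\'edei map of \cref{eq: Redei map}), not triple symbols attached to cyclic degree-$4$ extensions; the quartic R\'edei extensions only enter for the $8$-rank. Moreover you miss the one genuine subtlety at this level: when $\sgn(D)=g$ and every irreducible factor of $DP$ has even degree there is an irregular ambiguous ideal $\alpha$, and the corresponding column of $R_4$ contains $\Leg{g}{N(\alpha)}=(-1)^{\deg(DP)/2}$, which depends on $\deg(P)$ modulo $4$ and forces the constant field extension $k(\sqrt[4]{g})\cong\F_{q^4}(x)$ into $\Omega_4(D)$. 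A compositum of the $k(\sqrt{P_i})$ with a single quadratic extension controlling the infinite place does not govern this.

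Second, and more seriously, your explanation of where the odd-degree hypothesis enters is not correct. The reciprocity law of \cref{thm: Redei reciprocity} is exact --- $[A,B,C]=[A,C,B]$ with no correction term --- so there is no ``error term involving degrees modulo $2$ and the infinite place'' to absorb. The hypothesis is consumed at two different points. (i) It excludes the irregular ambiguous ideal class, so that every generator of $\ker(R_4)$ has squarefree norm $M\mid DP$ and the entry $\eta_{ij}$ is literally a R\'edei symbol; before swapping one must also use the trivial symbol $[D_1,D_2,-D_1D_2]$ of \cref{prop: rivial special symbol} together with multiplicativity to remove $P$ from $M$ and from $D_1$, a reduction your sketch omits (a single swap does not suffice when both $D_2$ and $M$ involve $P$). (ii) After the swap one needs the minimally ramified extension $F_{D_1,M}$ over $k(\sqrt{D_1},\sqrt{M})$ to avoid the ``bad case'' of \cref{def minimal ramification over C}, in which an auxiliary ramified prime $Q$ chosen by Chebotarev enters the construction, so the resulting extension would not depend only on $D$. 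The paper's \cref{claim: no exception case} rules this out by re-choosing the basis of $C[2]$ so that every $M$ has odd degree (multiplying basis elements by an odd-degree factor $Q_1$ of $D$), and this is exactly where the hypothesis that not all irreducible factors of $D$ have even degree is used. Without identifying this mechanism, your argument for $\Omega_8(D)$ does not close.
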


The main part of the proof of \cref{thm: omega exists intro} uses a symmetry law of the R\'edei symbol $[A,B,C]$ that takes its arguments in $\F_q(x)^*/\F_q(x)^{*^2}$. Adapting the methods for number fields \cite{Stevenhagen1}, we define the R\'edei symbol for function fields in \cref{def: Redei symbol}. Its entries have to satisfy
	\begin{align}
	&\gcd(A,B,C)=1, \label{intro: redei cond 1} \text{ where not all have odd degree, and }\\
	&(A, B)_P = (A,C)_P = (B, C)_P = 1 \text{ for all primes $P$,} \label{intro: redei cond 2}
\end{align}
where $(A,B)_P$ is the Hilbert symbol at a prime $P$ \cite[chapter XII]{Artin-Tate}.
The R\'edei reciprocity law can then be stated as follows (see also \cref{thm: Redei reciprocity}).
\begin{theorem}\label{thm: reciprocity intro}
	Let $A,B,C \in \F_q(x)^*/\F_q(x)^{*^2}$ satisfy \cref{intro: redei cond 1} and \cref{intro: redei cond 2}. Then 
	\begin{equation}
		[A,B,C] = [B,A,C] = [A,B,C]
	\end{equation} 
	and the R\'edei symbol is multiplicative in all its entries.
\end{theorem}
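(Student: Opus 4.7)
The plan is to adapt Stevenhagen's proof for quadratic number fields \cite{Stevenhagen1} to the function field setting. Recall that $[A,B,C]$ is defined (in \cref{def: Redei symbol}) as the Frobenius at primes dividing $C$ acting on a R\'edei extension $F_{A,B}/\F_q(x)$, obtained by adjoining a square root of an element $\alpha \in \F_q(x)(\sqrt{A})^*$ whose norm is $B$ modulo squares; such an $\alpha$ exists precisely because $(A,B)_P=1$ at every prime. The first step is to verify well-definedness: two valid choices of $\alpha$ differ by a norm from $\F_q(x)(\sqrt{A})^*$ times a square, and changing $\alpha$ by such a factor alters $F_{A,B}$ only by a subextension on which Frobenius at primes of $C$ acts trivially.

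Multiplicativity in the third entry is automatic from multiplicativity of the Artin map. For the first two entries I would argue by picking compatible witnesses: if $\alpha_i$ witnesses $(A,B_i)$, then $\alpha_1\alpha_2$ witnesses $(A,B_1B_2)$, so $F_{A,B_1B_2}$ sits inside the compositum of $F_{A,B_1}$ and $F_{A,B_2}$, and additivity of the Artin map yields additivity of the symbol; the same strategy handles the first slot. The symmetry $[A,B,C] = [B,A,C]$ is then a fairly direct consequence of the construction, since the solvability condition and the resulting R\'edei extension are symmetric in $A$ and $B$ (the conic $x^2 - Ay^2 - Bz^2 = 0$ is symmetric in $A$ and $B$).

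The substantive content is the reciprocity $[A,B,C] = [A,C,B]$. My plan is to construct a single abelian $2$-extension $L/\F_q(x)$ containing both $F_{A,B}$ and $F_{A,C}$, and to express both symbols in terms of local norm residue symbols inside $L$. Using the global product formula for the Hilbert symbol on $\F_q(x)$ \cite[ch.~XII]{Artin-Tate}, the product $[A,B,C] \cdot [A,C,B]$ should collapse to a global product of local Hilbert symbols of the form $(\alpha,\gamma)_P$, with $\alpha,\gamma$ the norm witnesses for $B$ and $C$ respectively. The hypotheses $(A,B)_P = (A,C)_P = (B,C)_P = 1$ at every $P$ then force this global product to equal $1$, yielding the desired equality.

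The main obstacle is careful bookkeeping at the prime at infinity of $\F_q(x)$ and at the constant-field extensions that it can force. In the number field case the analogous role is played by sign data at archimedean places; in the function field setting $\infty$ has finite residue field $\F_q$, but its Frobenius can introduce factors in $\F_q^*/\F_q^{*2}$ that have to be tracked consistently in every slot of the symbol. This is exactly where the standing hypothesis that not all of $A,B,C$ have odd degree enters, since it rules out the pathological behaviour at $\infty$ that would obstruct either defining the R\'edei extensions or matching their local invariants across the reciprocity law.
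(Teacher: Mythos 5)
Your overall strategy coincides with the paper's: both symbols are expressed through quartic extensions $F=E(\sqrt{\beta})$, $F'=E'(\sqrt{\gamma})$ with $\beta,\gamma\in K_A=k(\sqrt{A})$ of norms $B,C$ modulo squares, one proves a local identity $[A,B,C]_{F,P}\,[A,C,B]_{F',P}=\prod_{\p\mid P}(\beta,\gamma)_{\p}$ by a case analysis on which of $B,C$ the prime $P$ divides, and the product formula kills the right-hand side globally (note the product formula must be applied over $K_A$, where $\beta$ and $\gamma$ live, not over $\F_q(x)$ as you write). Symmetry in the first two slots and multiplicativity via the local decomposition are also as in the paper, although the paper deduces independence of the choice of $F_{A,B}$ \emph{from} the reciprocity law rather than proving it beforehand; your claim that two choices of witness differ by ``a norm times a square'' and leave the Frobenius at $C$ unchanged is not justified --- by \cref{lem: finding F} two admissible $F$'s differ by a twist $\beta\mapsto t\beta$ with $t\in k^*$, and such a twist does change the local symbols unless $t$ is a square at every prime dividing $C$.

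The genuine gap is your treatment of the infinite place. You assert that the hypothesis $\gcd_{\infty}(A,B,C)=1$ ``rules out the pathological behaviour at $\infty$''; it does not. When $\deg(A)$ and $\deg(B)$ are both even and $\gcd(A,B)$ has only even-degree factors, the primitive solution of $X^2-AY^2-BZ^2=0$ can give $\beta$ of odd valuation at the unramified infinite places of $K_A$, so $F/K$ ramifies at infinity, and the only remedy is to twist by an odd-degree irreducible $Q$, which introduces new \emph{finite} ramification outside $\gcd_{\infty}(A,B)$. \Cref{ex: hard case} shows this is unavoidable, in contrast with the number field case you are adapting. The paper's fix is the notion of an extension minimally ramified \emph{with respect to $C$} (\cref{def minimal ramification over C}): the auxiliary prime $Q$ is chosen by Chebotarev to satisfy $\Leg{Q}{P}=1$ for all $P\mid ABC$, which makes $Q$ split in $K_A$ and a local square at every prime of $C$, and one must then verify the local identity separately at $P=Q$ (where $L_Q=1$ and $R_Q=(Q,C)_Q=1$). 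Without this construction and the extra local check, your product over all places does not close up, so this step cannot be treated as mere ``bookkeeping.''
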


\subsection{Outline}
In \cref{sec: preliminaries} we introduce some preliminary concepts and explain the general strategy on how to calculate the $2^j$-rank of a class group. This is made explicit for $j=1$ and $j=2$ in \cref{sec: genus and redei} using genus theory \cite{Artin, Semirat, Zhang} and methods by R\'edei \cite{Redei1, Redei2, Wittmann} respectively. The calculation of the $8$-rank is explained in \cref{sec: 8-rank}. In \cref{sec: redei reciprocity} we give a definition of R\'edei symbols and prove the theorem of R\'edei reciprocity. The definition is similar to the one for number fields \cite{Stevenhagen1}, but some additional care has to be taken when defining minimally ramified extensions. R\'edei reciprocity will be the main tool in proving the existence of governing fields, which is done in \cref{sec: governing fields}.

The theory of R\'edei symbols, the reciprocity law and governing fields was originally developed for quadratic number fields. Part of the definitions and methods in this article are based on \cite{Stevenhagen1} and adapted for function fields. The interested reader can compare the number field and function field results using the following conversion of sections:
\[\begin{tabular}{c|c}
	This article &\cite{Stevenhagen1}\\
	\hline
	\cref{sec: genus and redei}&sections 2, 3\\
	\cref{sec: 8-rank}&sections 4, 5\\
	\cref{sec: redei reciprocity} & sections 7, 8\\
	\cref{sec: governing fields} & section 9
\end{tabular}\]

\paragraph{Notation}\label{paragraph: notation}
We always let $q$ be an odd prime power and $k=\F_q(x)$ be the rational function field. Generally, $K_A = k(\sqrt{A})$ is used to denote the quadratic extension for any non-square $A \in \F_q[x]$. In \cref{sec: redei reciprocity} we frequently use the extension $k(\sqrt{AB})$ with $A,B\in \F_q[x]$. We denote it by $K =k(\sqrt{AB})$ and use $E = K(\sqrt{A}) = K(\sqrt{B})=k(\sqrt{A}, \sqrt{B})$ for the quartic extension. A cyclic quartic extension of $K$ is usually denoted by $F$.

The $2^j$-rank of a class group is defined by identifying $C^{2^j}/C^{2^{j-1}}$ as an $\F_2$-vector space. In this article we will always write the class group multiplicatively. Quadratic Galois groups will be represented by $\{\pm1\}$, consistent with notation for quadratic symbols. Whenever the dimension of a vector space needs to be computed, it is assumed that an identification with $\F_2$ is made.

\section{Preliminaries}\label{sec: preliminaries}
Let $q$ be an odd prime and denote by $k = \F_q(x)$ the rational function field. By a \emph{hyperelliptic function field} we mean a quadratic extension $K_D = k(\sqrt{D})$ with $D \in \F_q[x]$ squarefree. The polynomial $D$ has a decomposition into irreducible parts as $D = eP_1\dots P_s$, where the $P_i\in \F_q[x]$ are distinct monic irreducible polynomials and $e \in \F_q^*$ is the sign of $D$. We may assume without loss of generality that $e=1$ or $g$ for some fixed generator $g$ of $\F_q^*$. We denote the infinite prime of $k$ by $P_{\infty}$. The infinite primes of $K_D$ are characterised as follows.
\begin{lemma}[\protect{\cite[proposition 14.6]{Rosen}}]
		The infinite prime $P_{\infty}$ of $k$
		\begin{itemize}			
			\item splits in $K_D$ when $\deg(D)$ is even and $e=1$, 
			\item is inert in $K_D$ when $\deg(D)$ is even and $e=g$,
			\item ramifies in $K_D$ when $\deg(D)$ is odd.
		\end{itemize}
\end{lemma}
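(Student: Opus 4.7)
The plan is to reduce everything to a local computation at the completion $k_\infty = \F_q((t))$, where $t = 1/x$ is a uniformizer for $P_\infty$. The splitting, inertia, or ramification of $P_\infty$ in $K_D$ is encoded entirely in the class of $D$ in $k_\infty^*/(k_\infty^*)^2$, so the main task is just to identify that class.

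First I would write $D = e x^n u(1/x)$ where $n = \deg D$ and $u \in \F_q[[1/x]]^*$ has constant term $1$; equivalently, $u$ is a $1$-unit in $\F_q[[t]]^*$. Because $q$ is odd, Hensel's lemma applied to $Y^2 - u$ shows every $1$-unit is a square in $k_\infty$, so $u \equiv 1 \pmod{(k_\infty^*)^2}$. Moreover $x = t^{-1} = t \cdot t^{-2} \equiv t \pmod{(k_\infty^*)^2}$, so $D \equiv e t^n$ modulo squares. Recall that $k_\infty^*/(k_\infty^*)^2 \cong \Z/2\Z \times \Z/2\Z$ with representatives $\{1, g, t, gt\}$, where $g$ is a non-square in $\F_q^*$.

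Now I would read off the three cases. If $n$ is even and $e = 1$, then $D$ is a square in $k_\infty$, so $k_\infty \otimes_k K_D \cong k_\infty \times k_\infty$ and $P_\infty$ splits. If $n$ is even and $e = g$, then $D \equiv g$ modulo squares; the extension $k_\infty(\sqrt{g}) = \F_{q^2}((t))$ is the unramified quadratic extension of $k_\infty$, so $P_\infty$ is inert. If $n$ is odd, then $D \equiv t$ or $D \equiv gt$ modulo squares; in either case the valuation $v_\infty$ takes an odd value on $D$, so $k_\infty(\sqrt{D})/k_\infty$ is a totally ramified quadratic extension and $P_\infty$ ramifies.

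The only mild subtlety is checking that $x \equiv t$ in $k_\infty^*/(k_\infty^*)^2$ rather than mistakenly treating $x$ as a unit at $P_\infty$; once that is pinned down, the argument is essentially just the standard description of $k_\infty^*/(k_\infty^*)^2$, so there is no serious obstacle.
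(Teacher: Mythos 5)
Your argument is correct: reducing to the square class of $D$ in $k_\infty^* /(k_\infty^*)^2$ with $k_\infty = \F_q((1/x))$, using Hensel's lemma to discard the $1$-unit, and reading off the three cases from the representatives $\{1,g,t,gt\}$ is exactly the standard computation. The paper gives no proof of its own here --- it simply cites Rosen --- and your local argument is essentially the one found in that reference, so there is nothing further to compare.
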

We call $K_D$ \emph{imaginary} when $P_{\infty}$ ramifies or is inert and \emph{real} when $P$ splits. The same distinction can also be made by the unit group, where imaginary fields have a finite unit group and real fields are of rank~$1$ over $\Z$.

Denote by $\O_D$ the integral closure of $\F_q[x]$ in $K_D$. The \emph{class group $C(D)$} (also denoted by $C$) is defined as the ideal class group $\O_D$. The $2$-part of $C(D)$ is completely determined by a sequence of integers, the \emph{$2^j$-ranks} for $j\geq 1$ as defined by \cref{def: 2^j-rank}. The sequence $r_2, r_4, r_8, \dots$ is weakly decreasing and zero for large enough $k$. As can be seen in \cref{def: 2^j-rank} there are two different ways of interpreting the $2^j$-rank.

Firstly, we can consider the quotient $C[2^j]/C[2^{j-1}]$, which describes the ideal classes of exact order $2^j$. Assuming we have a set of ideal classes generating $C[2^j]/C[2^{j-1}]$, the next rank can be calculated by checking which of these generators is the square of another ideal class. Indeed, if the class $A\in C[2^{j+1}]$ has exact order $2^{j+1}$ then $A^2$ has order $2^{j}$. In general it is hard to check which classes are squares, which is why we will also use the other expression.

For the second expression for the $2^j$-rank we use the dual group $\hat{C} = \operatorname{Hom}(C , \C)$. Let $H/K_D$ be the Hilbert class field of $K_D$, i.e . the maximal unramified abelian extension of $K_D$ where the infinite primes split completely \cite{Rosen2}. By the Artin map there is an isomorphism $C(D) \cong \Gal(H/K_D)$. For any $j\geq0$, the subgroup $C(D)^{2^j} \leq C(D)$ corresponds to an intermediate field $H_{2^j} \subset H$ such that $C(D)^{2^j} \cong \Gal(H/H_{2^j})$. It holds that
\[\Gal(H_{2^j}/K) \cong C/C^{2^j} \text{ and } \Gal(H_{2^j}/H_{2^{j-1}}) \cong C^{2^j}/C^{2^{j-1}} \cong \F_2^{r_{2^j}}.\]

A character $\chi \in \hat{C}[2^j]$ has a field of definition $L_{\chi}$ with a degree over $k$ that divides $2^j$. The kernel of $\chi$ contains $C^{2^j}$ and $L_{\chi}$ will be a subfield of $H_{2^j}$. In particular $C^{2^j}  = \cap_{\chi \in \hat{C}[2^j]} \ker(\chi)$ and $H_{2^j}$ is the compositum of the fields of definition of the characters $\chi \in \hat{C}[2^j]$. The extension $H_{2^{j-1}}\subset H_{2^{j}}$ is Galois with a Galois group of exponent $2$ and is therefore generated by $r_{2^j}$ characters in $\hat{C}[2^j]$ that have exact order $2^j$.
Similar to the first interpretation, the $2^{j+1}$ rank can be calculated by checking which characters in $\hat{C}[2^j]$ are squares of ones in $\hat{C}[2^{j+1}]$.

To find out which characters (or ideal classes) are squares of ones with higher torsion, we will need explicit expressions for the intermediate fields $H_{2^j}$. Writing $H_{2^{j}} = \prod_{i=1}^{r_{2^j}} H_{2^{j-1}}(\sqrt{A_i})$, we will explain in the next sections how to find these quadratic extensions $H_{2^{j-1}}(\sqrt{A_i})$ for $j=1,2,3$.

\section{Genus theory and R\'edei theory} \label{sec: genus and redei}
In this section we explain how to calculate the 2- and 4-rank of a hyperelliptic function field. While the results of this section are not new, we spend some time on the proof of \cref{thm: 2-rank} to explicitly construct bases of $C[2]$ and $\hat{C}[2]$ which are needed for the $8$-rank and results on governing fields in \cref{sec: 8-rank,sec: governing fields}.

\subsection{The 2-rank}\label{subsec: 2rank}
The Hilbert class field $H$ of $K_D$ is Galois over $k$ with Galois group $$\Gal(H/k) \cong \Gal(H/K_D)\rtimes \Gal(K_D/k) = C(D) \rtimes \langle \sigma \rangle$$ where $\sigma$ acts by inversion \cite{Peng}.  For the $2$-rank we focus on the subfield $H_2$, which is called the   \emph{genus field} of $K_D$. It is the maximal subextension of $H$ that is abelian over $k$ and it holds that
$$\Gal(H_2/\F_q(x)) = \Gal(H/\F_q(x))^{\text{ab}} = C/C^2 \rtimes \langle \sigma \rangle.$$ 
As the $2$-rank equals the dimension of $\Gal(H_2/H_1) = \Gal(H_2/K_D)$ as an $\F_2$-vector space, we obtain the $2$-rank by treating $\Gal(H_2/K_D) \subset \Gal(H_2/\F_q(x))$ as a subspace of codimension one. 

The quadratic characters generating $H_2$ are called \emph{genus characters}. They were first studied in the context of binary quadratic forms by Gauss (for number fields) \cite{Gauss} and Artin (for function fields) \cite{Artin}. A classification of the $2$-rank for imaginary function fields was obtained by Artin and extended to the real case by Zhang and S\'emirat. The author was unable to access \cite{Zhang}. A proof of its results,  based on the strategy explained in \cite[section~1]{Hu} is included in the author's master's thesis \cite[chapter 2]{Stokvis}.

\begin{theorem}[\cite{Artin}, \cite{Zhang}, \cite{Semirat}]\label{thm: 2-rank}
	Let $D = eQ_1\dots Q_{s_1}P_1\dots P_{s_2} \in \F_q[x]$ be a squarefree polynomial with the $Q_i$ irreducible of odd degree and the $P_j$ irreducible of even degree. Writing $s = s_1+s_2$, the $2$-rank of the class group $C(D)$ of the hyperelliptic function field $K_D$ is given by 
	\begin{itemize}
		\item $r_2= s-2$ if $e=1, \deg(D)$ is even and $s_1>0$,
		\item $r_2=s-1$ if  \hil$\deg(D)$ is odd\hr \:or \hil$e=1$ and $s_1=0$\hr \:or \hil$e=g, \deg(D)$ is even and $s_1>0$\hr,
		\item $r_2= s$ if $e=g$ and $s_1=0$.
	\end{itemize}
\end{theorem}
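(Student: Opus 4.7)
The plan is to exploit genus theory by describing the genus field $H_2$ explicitly as a multiquadratic extension of $k$. Because $C/C^2$ has exponent $2$, the conjugation action of $\sigma$ on it is trivial, so $\Gal(H_2/k) = C/C^2 \rtimes \langle \sigma \rangle$ is elementary abelian of exponent $2$; hence $H_2 = k(\sqrt{A} : A \in W)$ for some $\F_2$-subspace $W \subseteq k^*/k^{*2}$ that contains the class of $D$. Since $[H_2 : K_D] = 2^{r_2}$ and $[H_2 : k] = |W|$, we get $r_2 = \dim_{\F_2} W - 1$, so the entire task reduces to computing $\dim W$.

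I would first characterise $W$ as the set of classes of $A \in k^*$ such that $K_D(\sqrt{A})/K_D$ is unramified at every finite prime and the infinite primes of $K_D$ split completely in it. Comparing valuations $v_\mathfrak{P}(A) = e(\mathfrak{P}/P) v_P(A)$ at a finite prime $P$ and a prime $\mathfrak{P}$ of $K_D$ above it shows the first condition forces the squarefree part of $A$ to divide $D$. Hence $W$ is an $\F_2$-subspace of
\[V := \langle g, Q_1, \ldots, Q_{s_1}, P_1, \ldots, P_{s_2} \rangle \subseteq k^*/k^{*2},\]
which has dimension $s+1$. Writing a representative as $A = e' \prod_{i\in I_1} Q_i \prod_{j\in J} P_j$ with $e' \in \{1, g\}$, what remains is to cut out from $V$ the subspace satisfying the splitting condition at $P_\infty$.

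I would then case-split on the behavior of $P_\infty$ in $K_D$. If $P_\infty$ splits, the two completions of $K_D$ above it are each isomorphic to $k_\infty = \F_q((1/x))$, so $A$ being a square there forces $e' = 1$ and $\deg A$ even, the latter becoming $|I_1| \equiv 0 \pmod 2$ since each $P_j$ has even degree. If $P_\infty$ is inert, the completion is $\F_{q^2}((1/x))$, where every element of $\F_q^*$ is already a square, so only the parity condition on $|I_1|$ survives. If $P_\infty$ ramifies, I would use the identity that $A \in k^*$ is a square in $k_\infty(\sqrt{D})$ if and only if $A$ or $AD$ is a square in $k_\infty$; this translates into a single linear condition on $(e', |I_1|)$, depending on $e$. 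In each of the five situations partitioned by the theorem, summing the imposed linear conditions yields precisely the claimed value of $r_2 = \dim W - 1$.

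The main obstacle is the infinity condition in the ramified case, where a direct analysis inside the completion $K_{D, P_\infty}$ would be awkward; the reduction to ``$A$ or $AD$ is a square in $k_\infty$'' cleanly sidesteps this. One should also be careful with the parity constraints that the hypotheses force on $s_1$ (even when $\deg D$ is even, odd when $\deg D$ is odd), which is precisely why the three bullets of the theorem partition so cleanly.
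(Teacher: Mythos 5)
Your argument is correct and follows the second method sketched in the paper's proof: it computes $r_2$ as $\dim_{\F_2}W-1$, where $k(\sqrt{A}:A\in W)$ is the genus field, and your local conditions at the finite primes and at $P_\infty$ (split, inert, ramified) cut out exactly the subgroups $W$ whose generators appear in \cref{eq: genus field}. The only difference is one of completeness: the paper merely states \cref{eq: genus field} and defers its derivation to \cite{Artin}, \cite{Zhang}, \cite{Semirat} and the author's thesis, whereas you derive it directly from Kummer theory applied to the elementary abelian group $\Gal(H_2/k)$ together with the observation that unramifiedness at finite primes forces the squarefree part of $A$ to divide $D$ up to constants (for the dimension count you should also record the easy converse, that every such $A$ with even valuation at the ramified primes does give an extension unramified at all finite primes, which holds since the residue characteristic is odd).
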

\begin{proof}[Proof (sketch)]
	We give two methods of proving the theorem. One classifies the ideal classes in $C[2]$ while the other finds characters in $\hat{C}[2]$.
	
	The first way of computing the $2$-rank is to find all ideal classes of $C[2]$, so-called \emph{ambiguous ideal classes}. It can be shown that (almost) all ambiguous ideal classes can be generated by the ramified prime ideals $\p_i \mid P_i$. Indeed $[\p_i^2] = [(P_i)]$ is a trivial class in $C(D)$. The generators are subject to the relation $\prod_{i=1}^{s}\p_i = (\sqrt{D})$, which would imply $r_2$ to be $s-1$. In the case that $K_D$ is real, the $2$-rank is brought down by one by the existence of a fundamental unit. It imposes an extra relation on the generators. When $s_1=0$, the rank is raised by one because of the existence of an extra generating ideal in $C(D)$. In this case, $D$ can be written as $D = c(U^2-gV^2)$ for some $c\in \F_q^*$ and $U, V\in \F_q[x]$ and the ideal $\mathfrak{A} =(V, U+\sqrt{D})$  (or swapping $U$ and $V$ when $c\notin \F_q^{*^2}$) of norm $U$ (or $V$) generates an \emph{irregular} ambiguous class in $C(D)$.
	
	Another way of computing $r_2$ is to give an explicit description of the genus field of $K_D$ as 
	\begin{equation}\label{eq: genus field}
		H_2 = \begin{cases}
			\F_q(x)(\sqrt{e}, \sqrt{P_1}, \dots, \sqrt{P_s}) &\text{ if $2\mid \deg(D)$ and $s_1=0$},\\
			\F_q(x)(\sqrt{e}, \sqrt{Q_1Q_2},\dots, \sqrt{Q_1Q_{s_1}}, \sqrt{P_1},\dots, \sqrt{P_{s_2}}) &\text{ if $2\mid \deg(D)$ and $s_1 \neq 0$},\\
			\F_q(x)(\sqrt{eQ_1}, \dots, \sqrt{eQ_{s_1}}, \sqrt{P_1},\dots, \sqrt{P_{s_2}}) & \text{ if  $2\nmid \deg(D)$}.
		\end{cases}
	\end{equation}
\end{proof}

The genus field mentioned in the proof is the compositum of fields of definition generated by quadratic characters $$\chi_{i}: \Gal(H_2/k) \to \Gal(k(\sqrt{A_i})/ k)$$ that give the Galois action on each $\sqrt{A_i}$ respectively. The $A_i$'s correspond to the generators of the genus field in \cref{eq: genus field}. When restricted to $C/C^2 =\Gal(H_2/K_D) \subset \Gal(H_2/k)$ we obtain a generating set of quadratic characters $\{\chi_1, \dots, \chi_{r_2}\} \subset \hat{C}[2]$. The Artin map thus induces a map to $\Gal(H_2/K_D)$ given by 
\begin{align}
	C(D) &\to \Gal(H_2/K_D)\\
	\notag [\p] &\mapsto \Art{\p}{ H_2}{K_D} = (\chi_1(\p), \dots, \chi_{r_2}(\p)) = \left(\Leg{A_i}{P}, \dots, \Leg{A_{r_2}}{P} \right),
\end{align}
where the latter are quadratic symbols, for example defined in \cite[chapter 3]{Rosen}. The two proofs give us a basis of ambiguous ideal classes spanning $C[2]$ and a basis of quadratic characters defining $\Gal(H_2/K_D)$. A quadratic character $\chi_i \in \hat{C}[2]$ has field of definition $k(\sqrt{A_i}, \sqrt{D/A_i})$. All quadratic characters (genus characters) can be created as $\chi_{D_1} = \sum_{i\in S} \chi_i$ for any subset $S \subset \{1,\dots, r_2\}$ having field of definition $k(\sqrt{D_1}, \sqrt{D_2}),$ where $D = D_1D_2$ and $D_1= \prod_{i\in S} A_i$. Note that $\chi_{D_1} = \chi_{D_2} \in \hat{C}[2]$.

\begin{remark}
	As can be seen in \cref{eq: genus field}, a splitting $D = D_1D_2$ corresponds to a genus character $\chi_{D_1}$ only when $D_2$ is monic and of even degree. This is required to ensure the splitting behaviour of $P_{\infty}$ is consistent in $K_D$ and $k(\sqrt{D_1}, \sqrt{D_2})$. When $D$ is monic, i.e. $e=1$, the genus field may contain a trivial extension through the splitting $D = 1\cdot D$. The corresponding character $\chi_e$ is trivial and can thus be ignored.
\end{remark} 

\subsection{The 4-rank}
The following results have been introduced in \cite{Wittmann}. Consider the map 
\begin{equation}\label{eq: phi4}
	\phi_4: C[2] \to C/C^2
\end{equation}
incduced from the quotient map. An ideal class in $C[2]$ is also the square of an element in $C[4]$ if and only if it lies in $\ker(\phi_4)$. From the two proofs of \cref{thm: 2-rank} we have explicit generators for $C[2]$ and $\hat{C}[2]$. Writing $D = eQ_1\dots Q_{s_1}P_1\dots P_{s_2}$ with $s=s_1+s_2$, there exists an irregular ambiguous ideal $\mathfrak{A}$ in $C[2]$ when $s_1=0$. Thus the number of generators of $C[2]$ is given by
\begin{equation}\label{eq: delta def}
	\delta := \begin{cases}
		s+1 &\text{ if  $\deg(D)$ is even and $s_1=0$},\\
		s &\text{ otherwise,} 
	\end{cases}
\end{equation}
and we can define a surjection
\begin{equation}\label{eq: alpha for R_4}
	\alpha: \F_2^{\delta} \twoheadrightarrow C[2], \quad e_i \mapsto \begin{cases}
	[\p_i] &\text{ if } 1\leq i\leq s,\\
	[\mathfrak{A}] &\text{ if $i=s+1$ (only when $\delta = s+1$).}	
\end{cases}
\end{equation}
The first $s$ generators are subject to one or two relations, depending on whether $K_D$ is real or imaginary. The genus characters in $\hat{C}[2]$ have a set of $r_2+1$ generators subject to one relation given by $\chi_{1} = \prod_{i=1}^{r_2+1} \chi_{i}$. Using the inclusion map 
\begin{equation}
	\beta: C/C^2 \cong \Gal(H_2/K_D) \hookrightarrow \Gal(H_2/\F_q(x)) \cong \{\pm1\}^{r_2+1}
\end{equation} we construct the so-called \emph{R\'edei map} 
\begin{equation}\label{eq: Redei map}
	R_4: \F_2^{\delta} \xrightarrow{\alpha} C[2] \xrightarrow{\phi_4} C/C^2 \xrightarrow{\beta}
	\{\pm1\}^{r_2+1}.
\end{equation}
It follows that the $4$-rank can be calculated as follows.
\begin{theorem}[\protect{\cite[theorem 3.1]{Wittmann}}]\label{thm: 4-rank}
	Let $D \in \F_q(x)$ be a squarefree polynomial and $K_D$ the corresponding hyperelliptic function field. The $4$-rank of the class group $C(D)$ of $K_D$ is given by
	\[r_4 = \dim \ker(\phi_4) = r_2 - \rank R_4.\]
\end{theorem}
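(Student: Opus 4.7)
The plan is to split the stated identity $r_4 = \dim \ker(\phi_4) = r_2 - \rank R_4$ into its two equalities and verify each by elementary $\F_2$-linear algebra, relying on the explicit generators produced in the proof of \cref{thm: 2-rank}.

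First I would prove $r_4 = \dim \ker(\phi_4)$. By \cref{def: 2^j-rank} the $4$-rank equals $\dim_{\F_2}(C[4]/C[2])$. The squaring map $a\mapsto a^2$ on $C[4]$ is $\F_2$-linear with kernel $C[2]$ and image $C[2]\cap C^2$, so it descends to an isomorphism $C[4]/C[2]\cong C[2]\cap C^2$. Since $\phi_4$ is induced by the quotient $C\to C/C^2$, its kernel is exactly $C[2]\cap C^2$, yielding the first equality.

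For the second equality, note that $\dim C[2] = r_2$ (again by \cref{def: 2^j-rank}, since $C[1]$ is trivial), so rank--nullity applied to $\phi_4\colon C[2]\to C/C^2$ gives $\dim\ker(\phi_4) = r_2 - \rank\phi_4$. It therefore suffices to prove $\rank\phi_4 = \rank R_4$. This is immediate from the factorisation $R_4 = \beta\circ \phi_4\circ \alpha$: the map $\alpha$ is surjective by the construction in \cref{eq: alpha for R_4}, so $\im(\phi_4\circ\alpha) = \phi_4(C[2]) = \im\phi_4$, and $\beta$ is an inclusion of groups, hence injective, so post-composition by $\beta$ preserves the dimension of the image.

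The main obstacle is conceptual rather than technical: essentially all the substantive work was already carried out in \cref{thm: 2-rank}, where explicit generators for $C[2]$ (the ramified primes $\p_i$ and, when applicable, the irregular ambiguous class $\mathfrak{A}$) and a realisation of $C/C^2$ as a codimension-one subspace of $\{\pm 1\}^{r_2+1}$ were produced. The content of the present theorem is the observation that once those generators are fixed, $r_4$ is computed as $r_2$ minus the rank of a single concrete $\F_2$-matrix $R_4$ whose entries are the quadratic residue symbols $\bigl(\tfrac{A_i}{P_j}\bigr)$ appearing in the Artin map after \cref{thm: 2-rank}.
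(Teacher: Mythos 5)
Your proof is correct and follows exactly the route the paper intends: the paper gives no explicit proof (it cites Wittmann and lets the statement ``follow'' from the construction of $\alpha$, $\phi_4$, $\beta$), and your two-step linear-algebra verification --- identifying $C[4]/C[2]\cong C[2]\cap C^2=\ker(\phi_4)$ via squaring, then using rank--nullity together with surjectivity of $\alpha$ and injectivity of $\beta$ --- is precisely the argument being left implicit.
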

Calculating the $4$-rank means determining the rank of a $\delta \times (r_2+1)$-matrix $R_4$. The entries are quadratic symbols, which written multiplicatively are given by
\begin{equation}\label{eq: R4 entries}
	(R_4)_{ij} = \chi_i(\p_j) = \Leg{A_i}{P_j} \text{ (whenever $P_i$ and $A_j$ are relatively prime),}
\end{equation}
where the $\p_j$ are ramified primes generating $C[2]$ (and potentially one for the irregular ambiguous ideal class which is not given by \cref{eq: R4 entries}) and $\chi_i$ gives the Galois action on $\sqrt{A_i}$ in $H_2$.

\section{The 8-rank}\label{sec: 8-rank}
For the $8$-rank of $C(D)$, we will use the same setup as for the $4$-rank. Many results in this section also hold for number fields and can be found in \cite{Stevenhagen1}. Consider the map
\begin{equation}
	\phi_8: \ker(\phi_4) = C[2]\cap C^2 \to C^2/C^4
\end{equation}
induced from the quotient map. As ideal classes of exact order $8$ correspond to fourth powers in $C[2]$, the $8$-rank equals the dimension of the kernel of $\phi_8$. We can make a restriction to the R\'edei map in \cref{eq: Redei map} to obtain
\begin{equation}\label{eq: 8-rank map}
	R_8: \ker(R_4) \to C^2\cap C[2] \xrightarrow{\phi_8} C^2/C^4 \cong \Gal(H_4/H_2) \cong \prod_{i=1}^{r_4} \Gal(H_2(\sqrt{\beta_i})/H_2) \cong \{\pm1\}^{r_4}.
\end{equation}
The first part of this map is induced from \cref{eq: alpha for R_4} and is surjective. The $8$-rank can now be calculated as follows. 
\begin{theorem}
	Let $D \in \F_q(x)$ be a squarefree polynomial and $K_D$ the corresponding hyperelliptic function field. The $8$-rank of the class group $C(D)$ of $K_D$ is given by
	\[r_8 = \dim \ker(\phi_8) = r_4 - \rank R_8.\]
\end{theorem}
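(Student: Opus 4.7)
The plan is to establish two intermediate identities: first $r_8 = \dim \ker \phi_8$, and second $\dim \ker \phi_8 = r_4 - \rank R_8$, obtained by a rank-nullity calculation for the composition defining $R_8$.

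For the first identity I would observe that $\ker \phi_8 = C[2] \cap C^4$: indeed $x \in C[2]\cap C^2$ lies in $\ker \phi_8$ exactly when $x \in C^4$, and $C^4 \subseteq C^2$. Writing $C \cong \prod_i \Z/2^{a_i}\Z$ via the structure theorem for finite abelian $2$-groups, a direct check shows that $\dim_{\F_2}(C[2]\cap C^{2^{j-1}})$ equals the number of indices $i$ with $a_i \geq j$, which is $r_{2^j}$ (either of the two expressions in \cref{def: 2^j-rank}). Taking $j=3$ yields $r_8 = \dim \ker \phi_8$.

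For the second identity, the key preparatory step is to verify that the restriction $\alpha' := \alpha|_{\ker R_4}$ is surjective onto $C[2]\cap C^2 = \ker \phi_4$. Since $R_4 = \beta \circ \phi_4 \circ \alpha$ with $\beta$ injective, $v \in \ker R_4$ if and only if $\alpha(v) \in \ker \phi_4$; hence the image of $\alpha'$ lies in $\ker \phi_4$, and conversely any $y \in \ker \phi_4$ is hit by some $v \in \F_2^\delta$ by surjectivity of $\alpha$, and such a $v$ automatically lies in $\ker R_4$. Moreover $\ker \alpha \subseteq \ker R_4$ (since $R_4$ factors through $\alpha$), so $\ker \alpha' = \ker \alpha$.

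Now I apply rank-nullity twice. The surjection $\alpha'$ gives
\[\dim \ker R_4 = \dim \ker \alpha + \dim(C[2]\cap C^2) = \dim \ker \alpha + r_4,\]
using \cref{thm: 4-rank} for the last equality. On the other hand, $R_8 = \phi_8 \circ \alpha'$ (post-composed with the isomorphism $C^2/C^4 \cong \{\pm 1\}^{r_4}$), and since $\alpha'$ is surjective with kernel $\ker \alpha$, pulling back $\ker \phi_8$ along $\alpha'$ gives $\dim \ker R_8 = \dim \ker \alpha + \dim \ker \phi_8$. Combining with $\dim \ker R_4 = \dim \ker R_8 + \rank R_8$ and cancelling $\dim \ker \alpha$ from both expressions for $\dim \ker R_4$ produces $r_4 = \dim \ker \phi_8 + \rank R_8$, which together with the first step proves the theorem. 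The only genuine content is the well-definedness and surjectivity of $\alpha'$; once these are in place the dimension counting is mechanical.
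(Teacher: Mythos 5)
Your proposal is correct and follows essentially the same route as the paper, which states this theorem without a separate proof, relying on the two observations you verify: that $r_8=\dim\ker(\phi_8)$ because classes of exact order $8$ correspond to nontrivial elements of $C[2]\cap C^4$, and that the first map $\ker(R_4)\to C^2\cap C[2]$ in the definition of $R_8$ is surjective. Your rank--nullity bookkeeping (cancelling $\dim\ker\alpha$) is exactly the linear algebra the paper leaves implicit, so there is nothing to add.
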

There is no obvious choice of basis for $\ker(R_4)$ and $\Gal(H_4/H_2)$ as $\F_2$-vector spaces anymore, which makes the computation much harder. Let us try to make $\Gal(H_4/H_2)$ more explicit. The Hilbert subfield $H_4/K_D$ is the compositum of the fields of definition for all characters $\psi \in \hat{C}[4]$. The extension $H_4/H_2$ is generated by the characters in $\hat{C}[4]/\hat{C}[2]$. Such a quartic character $\psi$ becomes a genus character when squared. Classification of squares in the group of genus characters goes as follows. A version and proof of this lemma exist also for number fields \cite[lemma~4.2]{Stevenhagen1}.

\begin{lemma}\label{lem: decomposition 2nd type ffields}
	Let $K_D$ a hyperelliptic function field with ideal class group $C(D)$.	For a non-trivial  quadratic character $\chi \in \hat{C}[2]$ with field of definition $E = k(\sqrt{D_1}, \sqrt{D_2})$, having $\chi = \psi^2 \in \hat{C}^2$ for a quartic character $\psi$ is equivalent to any of the following:
	\begin{enumerate}
		\item \label{item 1}there exists a cyclic quartic extension $K_D \subset F$ inside $H$ containing $E$,
		\item \label{item 2}all ambiguous ideals of $K_D$ split completely in $K_D\subset E$,
		\item \label{item 3}for $i=1,2$ and all irreducible polynomials $P\mid D_i$ we have $\Leg{D/D_i}{P}=1$. When it exists, the extra irregular ambiguous ideal $\mathfrak{A}$ of norm $N(\mathfrak{A})$ satisfies $\Leg{D_1}{N(\alpha)} = \Leg{D_2}{N(\alpha)}=1$.
	\end{enumerate}  
\end{lemma}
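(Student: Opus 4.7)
My plan is to prove the main statement $\chi \in \hat{C}^2$ equivalent to each of (1), (2), (3) using a different tool: class field theory for (1), Pontryagin duality for (2), and an explicit conversion of Artin symbols into Legendre symbols for (3). The first equivalence is essentially tautological: a quartic character $\psi \in \hat{C}$ corresponds under the Artin isomorphism $C \cong \Gal(H/K_D)$ to a surjection $\Gal(H/K_D) \twoheadrightarrow \Z/4\Z$ whose kernel fixes a cyclic quartic extension $F/K_D$ inside $H$, and $\psi^{2} = \chi$ holds if and only if $F \supset E$.

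For the equivalence with (2), I use that in the perfect pairing between the finite abelian group $C$ and its dual $\hat{C}$, the subgroup $\hat{C}^{2}$ is exactly the annihilator of $C[2]$. Hence $\chi \in \hat{C}^{2}$ iff $\chi$ vanishes on $C[2]$, and translating via the Artin map this is exactly the statement that every ambiguous ideal class splits completely in the quadratic extension $E/K_D$.

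For the equivalence with (3) I will use the explicit generators of $C[2]$ obtained in the proof of \cref{thm: 2-rank}: the ramified primes $\p_i \mid P_i$ for each irreducible factor of $D$, together with the irregular ambiguous ideal $\mathfrak{A}$ when it exists. For a ramified prime $\p_i$ with $P_i \mid D_1$, I rewrite $E = K_D(\sqrt{D_2})$ using $\sqrt{D_1} = \sqrt{D}/\sqrt{D_2}$; then $\p_i$ splits in $E/K_D$ iff $D_2 = D/D_1$ is a square modulo $P_i$, i.e.\ $\Leg{D/D_1}{P_i} = 1$, and symmetrically when $P_i \mid D_2$. For the unramified class $\mathfrak{A}$, the Artin symbol in the biquadratic extension $E/k$ equals $\bigl(\Leg{D_1}{N(\mathfrak{A})},\Leg{D_2}{N(\mathfrak{A})}\bigr) \in \Gal(E/k) = \mvs^{2}$, and this element lies in the diagonal subgroup $\Gal(E/K_D)$ as the identity iff both Legendre symbols equal $+1$.

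The main obstacle I anticipate is this last step for the irregular ambiguous ideal: one must verify that $N(\mathfrak{A})$ is coprime to $D$ so the Legendre symbols are well-defined, translate the Artin symbol of the possibly non-prime ideal $\mathfrak{A}$ correctly as a product of local symbols at the primes dividing $N(\mathfrak{A})$, and check that the relation $\prod_{i} \p_i = (\sqrt{D})$ together with the unit-based relations in the real case impose no further conditions beyond those listed in (3). This consistency should ultimately come from the factorization $D = D_1 D_2$ and the explicit description $D = c(U^{2} - gV^{2})$ yielding $\mathfrak{A}$.
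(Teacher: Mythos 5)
Your proposal is correct and follows essentially the same route as the paper: the tautological correspondence between quartic characters and cyclic quartic extensions inside $H$ for (1), the observation that $\hat{C}^{2}$ annihilates $C[2]$ so that $\chi=\psi^2$ iff $\chi$ kills all ambiguous classes for (2), and the translation of Artin symbols of the explicit generators of $C[2]$ (ramified primes plus the irregular ideal) into Legendre symbols for (3). Your anticipated care with the irregular ambiguous ideal is in fact more detail than the paper's own (sketch-level) treatment, but introduces no new idea.
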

\begin{proof}
	The character $\chi$ belonging to the splitting $D = D_1D_2$ has $E$ as its field of definition. Requiring $\psi = \chi^2$ for a quartic character $\psi$ is indeed equivalent to the existence of a cyclic unramified extension $K\subset F$ of degree $4$ that contains $E$ as in $(i)$. The extension $F$ is contained in $H_4$.
	
	The fact that $\chi = \psi^2$ is also equivalent to $\chi$ vanishing on $C[2]$. The group $C[2]$ is generated by the ramifying primes $\p_i$ of $K_D$ and the potential irregular ideal class. The fact that $\chi$ is trivial on $C[2]$ is equivalent to saying that under the Artin map all these primes are trivial. In other words, all the ramified primes (and irregular ideal) of $K_D$ split in the extension $K\subset E$ as in \cref{item 2}. Equivalently, it holds that $\Leg{D_i}{p}=1$ whenever $p\mid D$ and $p\nmid D_i$ as in \cref{item 3}, because the ramified primes are the ones dividing $D$. In \cref{item 3} one has to include the condition on the irregular ideal as well.
\end{proof}
\begin{remark}
	Whenever there is no irregular ambiguous ideal, note that \cref{item 3} of \cref{lem: decomposition 2nd type ffields} is equivalent to $D_1$ and $D_2$ having a trivial Hilbert symbol for any prime $P$. Also note that the decompositions $D=D_1D_2 = D_2D_1$ are considered equal, as they generate the same quartic extension.
\end{remark}

\begin{definition}
		A decomposition $D = D_1D_2$ with the properties of \cref{lem: decomposition 2nd type ffields} is called a \emph{decomposition of the second type}.
\end{definition}

To represent $R_8$ as a matrix, we first choose a basis of $\ker(R_4)$ and write $[\m_j] \in C^2\cap C[2]$ for the corresponding ideals under the map $\alpha$ from \cref{eq: alpha for R_4}. We obtain a basis of size $\dim \ker(R_4) = \dim \ker(\phi_4) + (\delta - r_2) = r_4 + (\delta-r_2)$. These classes span $C^2\cap C[2]$ and are subject to $(\delta - r_2)$ relations.
We also choose a set $\{\psi_i\}_{i=1}^{r_4}$ of quartic characters in $\hat{C}[4]$ that form a basis of $\hat{C}[4]/\hat{C}[2]$. Their fields of definition $F_{i}$ are cyclic extensions of $K_D$ of degree $4$ and the quadratic extensions $H_2F_i/H_2$ span $H_4$. The intersection $F_i \cap H_2 = K_D(\sqrt{D_i})$ is a quadratic extension of $K$ such that $D = D_i\cdot \frac{D}{D_i}$ is a decomposition of the second type with genus character $\chi _i= \psi_i^2$. The map $R_8$ sends a class $[\m_j]$ to $R_8([\m_j]) \in \F_2^{r_4}$, of which the $i$-th component is $\Art{\m_j}{H_2F_i}{H_2}$. We can therefore represent $R_8$ by a matrix $(\eta_{ij})_{i,j} \in \text{Mat}_{r_4\times r_4+(\delta -r_2)}(\{\pm1\})$ with
\begin{equation}\label{eq: explicit R_8}
	\eta_{ij} = \Art{\m_j}{H_2 F_{i}}{K_D} \in \Gal(H_2F_{i}/H_2)\cong \{\pm1\} .
\end{equation}
Note that the $[\m_j]$ are classes in $C[2] \cap C^2$ so that $\Art{\m_j}{H_2F_i}{K_D}$ is the identity on $H_2$. Hence the symbol in \cref{eq: explicit R_8} can indeed be taken in $\Gal(H_2F_{i}/H_2)$. 

Each of the quartic characters $\psi$ in our chosen basis has a field of definition $F$ that is a quartic extension of $K_D$ and a quadratic extension of $E = K_D(\sqrt{D_1}) = k(\sqrt{D_1}, \sqrt{D_2})$. The splitting $D=D_1D_2$ is a decomposition of the second type and $E$ is the field of definition of the genus character $\chi_{D_1} = \psi^2$. Writing $F = E(\sqrt{\beta})$ with some $\beta \in E$, we can obtain an expression for $\beta$ by the following lemma.

\begin{lemma}[\protect{\cite[corollary 5.2]{Stevenhagen1}}] \label{lem: finding F}
	Let $Q$ be a field of characteristic different from $2$ and $a,b\in Q^*$ such that $a,b,ab \not\equiv 1 \mod{Q^{*^2}}$. Then a quadratic extension $E = Q(\sqrt{a} , \sqrt{b})\subset F$ is cyclic over $Q(\sqrt{ab})$ and dihedral of degree $8$ over $Q$ if and only if there exists a non-zero solution $(X,Y,Z) \in Q^3$ to the equation 
	\[X^2 - aY^2 - bZ^2 = 0\]
	such that for $\beta = X+Y\sqrt{a} \in Q(\sqrt{a})$ and $\alpha = 2(X+Z\sqrt{b}) \in Q(\sqrt{b})$ we have 
	\[F = E(\sqrt{\beta}) = E(\sqrt{\alpha}).\]
	Any other quadratic extension of $E$ that is dihedral over $Q$ of degree $8$ is of the form $F_t = E(\sqrt{t\beta})$ for some $t \in Q^*/\langle a, b, Q^{*^2}\rangle$, which we call a \emph{twist} of $\beta$.
\end{lemma}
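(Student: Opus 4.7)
My plan is to work directly with the Galois action on $\sqrt{\beta}$, using the non-trivial elements $\sigma, \tau \in \Gal(E/Q)$ with $\sigma(\sqrt{a}) = -\sqrt{a}$, $\sigma(\sqrt{b}) = \sqrt{b}$ and $\tau(\sqrt{a}) = \sqrt{a}$, $\tau(\sqrt{b}) = -\sqrt{b}$. Their composition $\sigma\tau$ has fixed field $Q(\sqrt{ab})$, so cyclicity of $F/Q(\sqrt{ab})$ is exactly the statement that $\sigma\tau$ lifts to an automorphism of $F$ of order $4$.

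For the direction $(\Leftarrow)$, I set $\beta = X + Y\sqrt{a}$ starting from a non-trivial solution of $X^2 - aY^2 - bZ^2 = 0$. The relation $\sigma\tau(\beta)\cdot\beta = X^2 - aY^2 = bZ^2$ gives $\sigma\tau(\beta)/\beta = (Z\sqrt{b}/\beta)^2 \in E^{*^2}$, so $\sigma\tau$ extends to $F = E(\sqrt{\beta})$ with $\widetilde{\sigma\tau}(\sqrt{\beta}) = Z\sqrt{b}/\sqrt{\beta}$. A direct computation yields $\widetilde{\sigma\tau}^2(\sqrt{\beta}) = -\sqrt{\beta}$, so this lift has order $4$; since $\tau(\beta) = \beta$ makes $\tau$ lift to an involution, together they force $\Gal(F/Q) \cong D_4$ with $F/Q(\sqrt{ab})$ cyclic. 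The assumption that none of $a, b, ab$ is a square in $Q$, combined with $(X,Y,Z) \neq 0$, rules out $\beta \in E^{*^2}$. For the equality $F = E(\sqrt{\alpha})$, the key computation is
\[
(X + Y\sqrt{a} + Z\sqrt{b})^2 = 2X^2 + 2XY\sqrt{a} + 2XZ\sqrt{b} + 2YZ\sqrt{ab} = 2(X+Y\sqrt{a})(X+Z\sqrt{b}) = \alpha\beta,
\]
where the middle step uses $aY^2 + bZ^2 = X^2$. Hence $\alpha\beta \in E^{*^2}$ and $E(\sqrt{\alpha}) = E(\sqrt{\beta})$.

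For $(\Rightarrow)$, suppose $F/Q$ is dihedral of degree $8$ containing $E$ with $F/Q(\sqrt{ab})$ cyclic. In $D_4$ the unique cyclic subgroup of order $4$ is the rotation group, so the other two index-$2$ subgroups $\Gal(F/Q(\sqrt{a}))$ and $\Gal(F/Q(\sqrt{b}))$ are Klein four-groups. The biquadratic structure of $F/Q(\sqrt{a})$ lets me write $F = Q(\sqrt{a})(\sqrt{b}, \sqrt{\beta})$ with some $\beta \in Q(\sqrt{a})^*$, say $\beta = X + Y\sqrt{a}$. Demanding that the canonical lift of $\sigma\tau$ to $F$ have order $4$ translates (via the computation of $\widetilde{\sigma\tau}^2$ above, read backwards) to $\sigma\tau(\beta)\cdot\beta = N_{Q(\sqrt{a})/Q}(\beta) \in b\cdot Q^{*^2}$, which produces the desired $Z \in Q$ with $X^2 - aY^2 - bZ^2 = 0$. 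The twist statement is then routine: any quadratic extension of $E$ is $E(\sqrt{\gamma})$ for a unique $\gamma \in E^*/E^{*^2}$; asking $F_t = E(\sqrt{t\beta})$ to remain dihedral of degree $8$ over $Q$ with the same cyclic-over-$Q(\sqrt{ab})$ structure forces $t \in Q^*$, and because $a, b$ are already squares in $E$, the class of $t$ is well-defined in $Q^*/\langle Q^{*^2}, a, b\rangle$.

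The main obstacle is the forward direction, specifically the normalization that places the Kummer generator inside $Q(\sqrt{a})$ rather than the larger $E$, and then the extraction of the conic equation from the order-$4$ condition on the lift of $\sigma\tau$. Both amount to tracking the subgroup lattice of $D_4$ carefully and exploiting that the cyclic quartic subextension is pinned down to be $F/Q(\sqrt{ab})$; no genuinely new tool beyond Kummer theory and a norm computation is needed.
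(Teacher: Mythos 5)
The paper does not prove this lemma at all: it is imported verbatim from \cite[corollary 5.2]{Stevenhagen1}, so there is no in-paper argument to compare yours against. Judged on its own, your proof is the standard Kummer-theoretic one and its overall architecture is sound: the backward direction (the extension of $\sigma\tau$ via $\sigma\tau(\beta)\beta=bZ^2$, the computation $\widetilde{\sigma\tau}^{\,2}(\sqrt{\beta})=-\sqrt{\beta}$, and the identity $(X+Y\sqrt a+Z\sqrt b)^2=\alpha\beta$ giving $E(\sqrt\alpha)=E(\sqrt\beta)$) is complete and correct, and the normalization in the forward direction (using that $\Gal(F/Q(\sqrt a))$ is Klein to place the Kummer generator in $Q(\sqrt a)^*$) is exactly the right move. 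The twist statement does reduce, as you say, to Hilbert 90 applied to $\gamma/\beta$, whose norm lies in $Q^{*2}$.

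One step is stated too strongly and, as written, would fail. You claim that demanding the lift of $\sigma\tau$ have order $4$ ``translates to $N_{Q(\sqrt a)/Q}(\beta)\in b\cdot Q^{*2}$.'' It does not: Galois-ness of $F/Q$ forces $N(\beta)\in\langle Q^{*2},a,b\rangle$, and the order-$4$ condition on $\widetilde{\sigma\tau}$ (namely $\sigma\tau(\sqrt{N(\beta)})=-\sqrt{N(\beta)}$) only excludes the classes of $1$ and $ab$, leaving both $a\cdot Q^{*2}$ and $b\cdot Q^{*2}$. In the case $N(\beta)\in a\cdot Q^{*2}$ the lift of $\sigma$ also has order $4$ and $\Gal(F/Q)\cong C_4\times C_2$ (e.g.\ $\beta=2+\sqrt2$ over $\Q$, $N(\beta)=2=a$, gives the cyclic quartic field $\Q(\sqrt{2+\sqrt2}\,)$); this is cyclic over $Q(\sqrt{ab})$ but not dihedral, and it does not produce a solution of $X^2-aY^2-bZ^2=0$. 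So you must invoke the dihedral hypothesis a second time --- equivalently, that $\Gal(F/Q(\sqrt a))$ is Klein, so $\sigma$ does \emph{not} lift to order $4$ --- to rule out $N(\beta)\in a\cdot Q^{*2}$ and land on $N(\beta)=bZ^2$. You already have this tool on the table (you note the other two index-$2$ subgroups are Klein four-groups), so the fix is a one-line case check, but the sentence as written conflates the two conditions and skips the case that actually needs excluding.
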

As can be seen in \cref{eq: explicit R_8} the entry $\eta_{ij}$ of $R_8$
depends on the choice of ideal $\m_j$ and extension $F_i$. Because $[\m_j] \in C[2]$ we can also take its conjugate (i.e. inverse) as representative, making $\eta_{ij}$ only depend on $M_i= N_{K_D/\F_q(x)}(\m_j) \in \F_q[x]$. By \cref{lem: finding F} the quartic extension $F_i$ only depends on the decomposition $D = D_1D_2$ of the second type. Writing $\eta_{ij} =\Art{\m_j}{H_2F_i}{H_2}=[D_1, D_2,M_j]$, we obtain a first definition of the R\'edei symbol for function fields.
\begin{definition}[R\'edei symbol, version 1]\label{def: early Redei symbol}
	Let $D = D_1D_2$ be a decomposition of the second type, $F/K_D$ a quartic extension corresponding to \cref{item 1} in \cref{lem: decomposition 2nd type ffields}, and $M\mid D$ a polynomial that is the norm of an ideal $\m \in K_D$ such that $[\m]\in C^2\cap C[2]$. Then the \emph{R\'edei symbol} of $D_1, D_2$ and $M$ is given by
	\begin{equation}
		[D_1, D_2, M] := \Art{\m}{H_2F}{H_2} \in \{\pm1\}.
	\end{equation}
\end{definition}

\begin{example}
	Let us illustrate how to calculate the $2$-, $4$- and $8$-rank in the case that $D=QP$ is the product of two primes where $Q$ has odd degree and $P$ has even degree. The function field $K = \F_q(x, \sqrt{QP})$ is imaginary and by \cref{thm: 2-rank} the $2$-rank equals $1$. The quadratic characters generating $\hat{C}[2]$ are $\chi_Q$ and $\chi_P$. They are in fact equal as their product $\chi_Q\chi_P$ is the trivial character. The prime ideals generating $C[2]$ are $\q$ and $\p$ lying above $Q$ and $P$ respectively, also with the relation that $[\q\p]$ is the trivial class. By \cref{thm: 4-rank} the $4$-rank equals $r_2$ minus the rank of $R_4$. The R\'edei matrix in this instance is a $(2\times 2)$-matrix of rank $0$ or $1$, where all the entries are equal to the same quadratic symbol $\chi_P(\q) = \Leg{Q}{P}$ as a result of quadratic reciprocity \cite[theorem 3.5]{Rosen}. Thus the $4$-rank is $1$ if and only if $\Leg{Q}{P}=1$. Note that the genus field is $H_2 = k(\sqrt{Q}, \sqrt{P}).$

	Assuming that $\Leg{Q}{P}$=1, we want to calculate the $8$-rank. The only decomposition of the second type is the splitting $D_1 = Q, D_2=P$, corresponding to the character $\chi_Q$ (which equals $\chi_P$). A generator of $C^2\cap C[2]$ is the ideal class $[\q] = [\p]$ and so the map $R_8$ from \cref{eq: 8-rank map} is determined by the R\'edei symbol $[Q,P, P] = \Art{\p}{H_2(\sqrt{\beta})}{K_D}$. The extension $H_2(\beta)/H_2$ can be found by \cref{lem: finding F}: we know that the conic $X^2 = QY^2 + PZ^2$ has a rational solution by \cref{lem: decomposition 2nd type ffields} and we can set $\beta = X+\sqrt{Q}Y$.
	
	The prime ideal $\p$ in $K_D$ splits in $H_2/K_D$ as $\p = \p_1\p_2$, where we set $\p_1=(\sqrt{P}, X-Y\sqrt{Q})$. As the extension $H_2(\beta)/K_D$ is abelian, we only have to look at the splitting behaviour of $\p_1$ in $H_2(\beta)/H_2$ to compute the Artin symbol $\Art{\p}{H_2(\sqrt{\beta})}{K_D}$. It follows that
	$$r_8= 1 \text{ if and only if } \beta \in H_2 \text{ is a square modulo } \p_1.$$
	Note that there is an isomorphism $\F_q[x, \sqrt{Q}, \sqrt{P}]/\p_1 \cong \F_q[x]/(P)$ by the identification $\overline{\sqrt{Q}} \mapsto \bar{X}/\bar{Y}$. Thus $\beta$ is a square modulo $\p_1$ if and only if $2X$ is a square modulo $P$. Since $P$ has even degree all elements in $\F_q^*$ are squares modulo $P$. Using the reciprocity law we also know that $\Leg{Y}{P} = \Leg{P}{Y} = \Leg{X^2}{Y}=1$, meaning $\bar{Y}$ is a square. Hence $\bar{\beta}$ is a square if and only if $\overline{\sqrt{Q}} = \bar{X}/\bar{Y}$ is a square in $\F_q[x]/(P).$ We conclude that 
	\[r_8 = 1 \text{ if and only if $P$ splits in } k(\sqrt{g}, \sqrt[4]{G}).\]

\end{example}

\section{Minimally ramified extensions and R\'edei reciprocity}\label{sec: redei reciprocity}

The matrix entries for the $R_8$-map in \cref{eq: 8-rank map} are defined by constructing extensions of $K_D$ that are completely unramified. This condition can actually be relaxed into a property we will call minimally ramified and allows for a more general definition of the R\'edei symbol. This general symbol obeys a reciprocity law that can be used to show the existence of governing fields. The theory is based on the version for number fields \cite{Stevenhagen1}, which relies on methods by R\'edei \cite{Redei1} and Corsman \cite{Corsman}.

The R\'edei symbol in \cref{def: early Redei symbol} takes arguments $D_1, D_2, M$, where $D_1$ and $D_2$ are squarefree. When we assume there is no irregular ambiguous ideal class (which will be one of the assumptions to be able to construct governing fields in \cref{sec: governing fields}), $M$ will also be squarefree as it is the norm of a ramified prime in $C[2]\cap C^2$. The general R\'edei symbol $[A,B,C]$ will therefore take its arguments in $k^*/k^{*2}$. Each class $\bar{A}\in k^*/k^{*2}$ contains a unique (up to multiplication by $\F_q^*$) squarefree element $A \in \F_q[x]$ which we will use to represent the class.  For non-trivial squarefree polynomials $A,B \in k^*$, the extension
\[k(\sqrt{AB}) = K \subset E = k(\sqrt{A}, \sqrt{B})\]
is quadratic and unramified at all finite primes not dividing $\gcd(A,B)$, and $E/k$ is ramified at the finite primes dividing $AB$ and at infinity if $A$ and/or $B$ have odd degree. To be able to construct minimally ramified extensions, we also require $A$ and $B$ to have trivial quadratic Hilbert symbols $(A,B)_P = 1$ for all primes $P$ (including the infinite prime) so that
\begin{equation}\label{eq: norm solvability}
	X^2-AY^2 - BZ^2=0
\end{equation}
has a non-trivial solution in $k$ by the Hasse-Minkowski principle  \cite[theorem 3.1]{Lam}. For a definition of Hilbert symbols for global fields and a reciprocity law we refer to \cite[chapter XII]{Artin-Tate}.

\begin{remark} Recall that the infinite prime ramifies in the quadratic extension $k(\sqrt{A}) /k$ precisely when $A$ has odd degree. This divisibility by infinity will be included in the greatest common divisor, denoted by $\gcd_{\infty}(\cdot\, ,\cdot)$. For $A,B \in \F_q[x]$ we say that $P_{\infty}$ divides $\gcd_{\infty}(A,B)$ if and only if $A$ and $B$ have odd degree.
\end{remark}

\subsection{Minimally ramified extensions}\label{subsec: minimally ramified extensions}
The non-trivial solvability of \cref{eq: norm solvability} implies the existence of a cyclic quartic extension
\[k(\sqrt{AB}) = K \subset F = E(\sqrt{\beta}) = E(\sqrt{\alpha})\]
by \cref{lem: finding F}. In the special case that $A=B$ there is a similar result by the following lemma.

\begin{lemma}[\protect{\cite[corollary 5.3]{Stevenhagen1}}] \label{lem: finding F special case}
	Let $Q$ a field of characteristic different from $2$. For $E=Q(\sqrt{a})$ with $a \not\equiv 1 \mod{Q^{*^2}}$, a quadratic extension $E\subset F$ is cyclic over $Q$ if and only if there exists a non-zero solution $(X,Y,Z) \in Q^3$ to 
	\[X^2-aY^2-aZ^2=0\]
	such that we have $F = E(\sqrt{\beta})$ for $\beta =X+Y\sqrt{a} \in Q(\sqrt{a})$ of norm $\beta\beta' \in aQ^2$. Any other such extension is of the form $F_t = E(\sqrt{t\beta})$ for some unique $t \in Q^*/\langle a, Q^{*2} \rangle$. 
\end{lemma}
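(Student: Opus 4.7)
The plan is to apply Kummer theory to the quadratic extension $E/Q$ and classify the quadratic extensions $F = E(\sqrt{\beta})$, $\beta \in E^*$, for which $F/Q$ is cyclic of degree $4$. Write $\sigma$ for the non-trivial element of $\Gal(E/Q)$ and $\beta' := \sigma(\beta)$. First, $F/Q$ is Galois iff $E(\sqrt{\beta}) = E(\sqrt{\beta'})$, equivalently iff $N_{E/Q}(\beta) = \beta\beta' \in Q^* \cap E^{*2}$; expanding $(u+v\sqrt{a})^2$ with $u,v \in Q$ and using $\mathrm{char}\,Q \neq 2$ identifies $Q^* \cap E^{*2} = Q^{*2} \cup aQ^{*2}$.

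Next I would distinguish the two Galois groups by the value of $N(\beta)$. Pick any lift $\tau$ of $\sigma$ to $F$. If $\beta\beta' = c^2$ with $c \in Q^*$, then $\sqrt{\beta'} = \pm c/\sqrt{\beta}$, and computing $\tau^2(\sqrt{\beta})$ directly gives $+\sqrt{\beta}$, so $\tau$ has order $2$ and $\Gal(F/Q) \cong V_4$. If instead $\beta\beta' = aZ^2$ with $Z \in Q^*$, then $\sqrt{\beta'} = \pm Z\sqrt{a}/\sqrt{\beta}$, and the analogous computation produces $\tau^2(\sqrt{\beta}) = -\sqrt{\beta}$, the extra sign coming from $\sigma(\sqrt{a}) = -\sqrt{a}$; hence $\tau$ has order $4$ and $\Gal(F/Q) \cong \Z/4\Z$. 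Writing $\beta = X + Y\sqrt{a}$ with $X,Y \in Q$, the condition $\beta\beta' = X^2 - aY^2 \in aQ^{*2}$ is precisely the existence of $Z \in Q^*$ with $X^2 - aY^2 - aZ^2 = 0$. Non-triviality of $(X,Y,Z)$ is automatic, since $(X,Y) = (0,0)$ would force $Z = 0$ and $\beta = 0$, and $\beta \notin E^{*2}$ follows from $N(\beta) \in aQ^{*2} \setminus Q^{*2}$.

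For the twist statement, suppose $F_i = E(\sqrt{\beta_i})$, $i=1,2$, are two cyclic quartic extensions of $Q$ containing $E$, with $N(\beta_i) \in aQ^{*2}$. Then $\delta := \beta_2/\beta_1 \in E^*$ has norm in $Q^{*2}$, say $c^2$. Applying Hilbert 90 to $\delta/c$ gives $\delta/c = \alpha/\sigma(\alpha) = \alpha^2/N(\alpha)$ for some $\alpha \in E^*$, so $\delta = t\alpha^2$ with $t = c/N(\alpha) \in Q^*$ and therefore $F_2 = E(\sqrt{t\beta_1})$. Two values $t, t' \in Q^*$ produce the same extension iff $t'/t \in Q^* \cap E^{*2} = \langle a, Q^{*2}\rangle$, so $t$ is unique in $Q^*/\langle a, Q^{*2}\rangle$. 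The main care needed is in the sign-tracking of the $\tau^2$ calculation, which is what separates the two norm classes and forces the Klein-four versus cyclic dichotomy; the Hilbert-90 step parallels the corresponding argument behind \cref{lem: finding F} and should not introduce any new difficulties.
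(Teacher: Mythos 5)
The paper does not actually prove \cref{lem: finding F special case}: it is imported verbatim from \cite[corollary 5.3]{Stevenhagen1}, where it arises as the specialisation $a=b$ of the dihedral classification behind \cref{lem: finding F}. Your argument is a direct, self-contained Kummer-theoretic proof instead, and it is correct. The key points all check out: $Q^*\cap E^{*2}=Q^{*2}\cup aQ^{*2}$ (from expanding $(u+v\sqrt a)^2$ in characteristic $\neq 2$); the Galois criterion $N(\beta)\in Q^*\cap E^{*2}$; the sign computation $\tau^2(\sqrt\beta)=\pm\sqrt\beta$ according to whether $N(\beta)$ lies in $Q^{*2}$ or $aQ^{*2}$, where the choice of sign in $\tau(\sqrt\beta)=\pm c/\sqrt\beta$ (resp.\ $\pm Z\sqrt a/\sqrt\beta$) cancels in $\tau^2$, so the dichotomy $V_4$ versus $\Z/4\Z$ is well defined; the observation that a non-zero solution automatically has $Z\neq0$ and $(X,Y)\neq(0,0)$ because $a\notin Q^{*2}$, and that $\beta\notin E^{*2}$ since $N(\beta)\in aQ^{*2}$; and the Hilbert~90 argument showing any two such $\beta$'s differ by an element of $Q^*E^{*2}$, with uniqueness of the twist class in $Q^*/\langle a,Q^{*2}\rangle$. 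Compared with deducing the statement from \cref{lem: finding F}, your route is more elementary and makes the cyclic-versus-Klein-four distinction transparent, at the cost of not exhibiting the second generator $\alpha=2(X+Z\sqrt a)$ that the dihedral picture provides; for the purposes this lemma serves in \cref{sec: redei reciprocity} that is no loss.
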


Suppose there exists a cyclic extension $F/K$ coming from a solution to \cref{eq: norm solvability}. The primes $\p$ in $K$ dividing both $A$ and $B$ are ramified in $E/K$ and these $\p$ will be totally ramified in $F/K$, because $\beta$ and $\alpha$ have norms (up to squares in $k^*$) $A$ and $B$ respectively. This might include the infinite primes in $K$ when $A$ and $B$ are both of odd degree. We cannot avoid ramification of these primes, but we will be able to find an $F$ that allows almost no other ramification.

\begin{definition}\label{def: minimal ramification}
	For non-trivial $A,B \in k^*/k^{*2}$ satisfying $(A,B)_P=1$ for all primes $P$, we call the cyclic extension $F/K$ resulting from a non-trivial solution to \cref{eq: norm solvability} \emph{minimally ramified over $E=k(\sqrt{A}, \sqrt{B})$} if it is either
	\begin{enumerate}
		\item unramified over all primes $P \nmid \gcd_{\infty}(A,B)$ when at least one of $\deg(A), \deg(B)$ is odd or $\gcd_{\infty}(A,B)$ has a finite prime of odd degree, or
		\item ramified over at most one finite prime $P \nmid \gcd_{\infty}(A,B)$ when $\deg(A)$ and $\deg(B)$ are both even and $\gcd_{\infty}(A,B)$ contains only finite primes of even degree.\label{item: extra ramified}
	\end{enumerate} 
\end{definition}

As we want to prove a reciprocity law on three entries, they have to pairwise satisfy the above condition. We obtain the following definition.

\begin{definition}\label{def minimal ramification over C}
	Let $A,B,C \in k^*/k^{*2}$ be non-trivial classes represented by squarefree polynomials  $A,B,C \in \F_q[x]$ that satisfy
	\begin{align}
		&\operatorname{gcd}_{\infty}(A,B,C)=1, \label{redei cond 1}\\
		&(A, B)_P = (A,C)_P = (B, C)_P = 1 \quad\text{ for all primes $P$.} \label{redei cond 2}
	\end{align}
	We call a minimally ramified extension $F$ as in \cref{def: minimal ramification} \emph{minimally ramified over $k(\sqrt{A}, \sqrt{B})$ with respect to $C$} if one of the following holds:
	\begin{enumerate}
		\item \label{good case} $k(\sqrt{AB}) \subset F$ is unramified outside $\gcd_{\infty}(A,B)$, or 
		\item \label{bad case}$\deg(A)$ and $\deg(B)$ are both even, $\gcd_{\infty}(A,B) $ contains only irreducible parts of even degree, and there is an extra ramified prime $Q \in \F_q[x]$ as in \cref{item: extra ramified} from \cref{def: minimal ramification} satisfies
		\begin{equation}\label{eq: extra ramification requirement}
			Q \text{ has odd degree, and }\:\Leg{Q}{P}=1 \text{ for all finite primes $P$ dividing $ABC$.}
		\end{equation}
	\end{enumerate}
\end{definition}

\begin{lemma}\label{lem: min ramified exists}
	For non-trivial $A,B,C  \in k^*/k^{*2}$ satisfying \cref{redei cond 1} and \cref{redei cond 2} there exists an extension $F_{A,B}$ that is minimally ramified over $E$ with respect to $C$. 
\end{lemma}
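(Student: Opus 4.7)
My plan is to build a cyclic quartic candidate $F_0 = E(\sqrt{\beta_0})$ from any rational point on the norm conic and then correct its ramification by a twist, handling the bad case of \cref{def minimal ramification over C} by introducing one extra ramified prime $Q$ produced by Chebotarev.

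First, condition \cref{redei cond 2} together with Hasse--Minkowski yields a primitive rational solution $(X_0, Y_0, Z_0) \in \F_q[x]^3$ to $X^2 - AY^2 - BZ^2 = 0$. Setting $\beta_0 = X_0 + Y_0 \sqrt{A}$, the extension $F_0 = E(\sqrt{\beta_0})$ is cyclic of degree $4$ over $K$ by \cref{lem: finding F} (or by \cref{lem: finding F special case} when $A\equiv B \bmod k^{*2}$). Using the norm relation $X_0^2 - A Y_0^2 = B Z_0^2$, a local valuation analysis shows that the primes of $K$ that can ramify in $F_0/K$ all lie above the finite primes dividing $AB$, above the possibly spurious primes dividing $Y_0 Z_0$, or above $P_\infty$. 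The primes in $\gcd(A,B) \cup \gcd_\infty(A,B)$ are allowed by \cref{def: minimal ramification}, so what needs to be removed are the spurious finite primes $P \mid Y_0 Z_0$ with $P \nmid AB \cdot \gcd_\infty(A,B)$.

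To remove this spurious ramification I twist. By the final clause of \cref{lem: finding F}, every eligible cyclic quartic extension has the form $F_t = E(\sqrt{t \beta_0})$ for a unique $t \in k^*/\langle A, B, k^{*2}\rangle$, and at each spurious $P$ the condition ``$F_t$ unramified above $P$'' becomes a single $\F_2$-linear constraint on the class of $t$ in $k_P^*/k_P^{*2}$. Weak approximation assembles a global $t \in k^*$ fulfilling these finitely many local constraints modulo a single global parity obstruction arising from the degree map on $k^*$ and the quadratic reciprocity law in $\F_q[x]$. In the good case of \cref{def minimal ramification over C}, one of $A, B$ has odd degree, or $\gcd_\infty(A,B)$ contains an odd-degree finite prime; such a place lies in the allowed locus and its local parity is unconstrained, so it absorbs the global obstruction and a suitable twist $t$ exists.

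In the bad case both $\deg(A), \deg(B)$ are even and $\gcd_\infty(A,B)$ contains only even-degree primes, so the global obstruction cannot be absorbed inside the allowed locus and one extra allowed prime is required. Chebotarev density applied to the compositum of the quadratic extensions $k(\sqrt{P})$ for $P \mid ABC$, together with the constant-field extension that controls degree parity, produces infinitely many monic irreducible odd-degree primes $Q$ satisfying \cref{eq: extra ramification requirement}. Adjoining such a $Q$ to the allowed locus contributes exactly the missing $\F_2$-degree of freedom, after which the twisting argument of the previous paragraph yields a $t$ such that $F_t$ is ramified outside $\gcd_\infty(A,B)$ only at $Q$.

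The main obstacle is the precise parity bookkeeping underlying both steps: identifying the global obstruction to the twist in the correct cohomological language, verifying the linear disjointness needed for Chebotarev to deliver $Q$ satisfying every Legendre condition in \cref{eq: extra ramification requirement} simultaneously, and confirming that the extra $\F_2$-freedom contributed by $Q$ lines up with the obstruction rather than creating a new one.
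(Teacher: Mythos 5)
Your outline (primitive solution to the conic, a corrective twist, Chebotarev to supply the extra prime in the bad case) has the same shape as the paper's proof, but there are genuine gaps in the middle. First, you list every finite prime dividing $AB$ as potentially ramified and then only declare those in $\gcd(A,B)$ to be allowed; the primes dividing exactly one of $A$ and $B$ are \emph{not} in $\gcd_{\infty}(A,B)$, so you must prove $F/K$ is unramified above them, and your sketch never does. The paper handles this with a valuation computation: for $P\mid A$, $P\nmid B$, the unique $\p\mid P$ in $K_A$ has ramification index $2$, so $2\ord_{\p}(\beta)=\ord_{\p}(\beta\beta')=2\ord_P(BZ^2)=4\ord_P(Z)\equiv 0 \bmod 4$, and no twist is needed there. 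Second, the ``spurious'' ramification at primes dividing $Y_0Z_0$ that your weak-approximation machinery is built to remove does not actually occur: for a primitive solution, a finite $P\nmid AB$ cannot divide both $\beta=X+Y\sqrt{A}$ and $\alpha=2(X+Z\sqrt{B})$ (else $P$ would divide $X,Y,Z$), and since $F=E(\sqrt{\beta})=E(\sqrt{\alpha})$ whichever of the two is a unit above $P$ exhibits $F/K$ as unramified there. This makes your entire ``global parity obstruction'' analysis unnecessary --- which matters, because that analysis is precisely the step you concede you have not verified, and carrying it out would also force you to control the support of $t$ so the twist does not create new ramification at primes dividing exactly one of $A,B$.

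Third, your mechanism for the good case is not right as stated: when exactly one of $\deg(A),\deg(B)$ is odd, $P_\infty$ does \emph{not} lie in $\gcd_{\infty}(A,B)$, so there need not be any allowed odd-degree place available to ``absorb'' an obstruction; what actually happens is that $P_\infty$ is automatically unramified, again by the valuation argument (using that $\p_\infty$ ramifies in $K_A/k$). In the paper the only place where a twist is genuinely required is $P_\infty$ in the case that $\deg(A)$ and $\deg(B)$ are both even, where one twists by an odd-degree irreducible --- taken inside $\gcd_{\infty}(A,B)$ if possible, and otherwise supplied by Chebotarev subject to \cref{eq: extra ramification requirement}. Your bad-case paragraph is essentially correct in spirit, but it rests on the unproven twisting framework of the previous step.
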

\begin{proof}
	Let $(X,Y,Z)$ be a primitive solution in $\F_q[x]$ to \cref{eq: norm solvability} and $F/K$ the resulting quartic extension. First consider a finite prime $P$ that does not divide $A B$. Then $P$ cannot divide both $\beta = X+Y\sqrt{A}$ and $\alpha = 2(X+Z\sqrt{B})$, as it would imply that $P$ divides both $N(\beta) = BZ^2$ and $N(\alpha)= AY^2$ making it a common divisor of $X,Y$ and $Z$ contradicting our assumption. Thus without loss of generality $\beta$ is a unit at a prime above $P$ in $k(\beta)$, implying that the extension $K \subset F = k(\sqrt{AB}, \sqrt{A}, \sqrt{\beta})$ is unramified at a prime $\p$ dividing $P$.
	
	Next, consider a finite prime $P$ dividing $AB$. Without loss of generality assume it divides $A$. As $P\nmid B$ the field $E = K(\sqrt{B}) = K_A(\sqrt{B})$ is a quadratic extension unramified at $P$ of both $K$ and $K_A$. Thus $F/K$ is unramified in primes above $P$ if and only if $F/K_A$ is. Write $F = K_A(\sqrt{\beta}, \sqrt{\beta'})$ with $\beta\beta' = X^2-AY^2 = BZ^2$. A prime $\p\mid  P$ is unramified in $F/K_A$ if and only if $\beta$ and $\beta'$ have an even valuation at $\p$. Since we assumed that $P\mid A$, there is a unique $\p\mid P$ in $K_A$ with ramification index $2$. We obtain
	\[2\ord_{\p}(\beta) = \ord_{\p}(\beta\beta') = 2\ord_P(BZ^2) = 2\cdot 2 \ord_P(Z) \equiv 0 \mod4,\]
	which shows that $P$ is unramified in $F/K$. 
	
	For the infinite prime $P_{\infty}$ of $k$ we will do a similar construction. Let $\p_{\infty}$ be a prime in $K_A$ dividing $P_{\infty}$ with ramification index $e$. We obtain
	\[2\ord_{\p_{\infty}}(\beta) = \ord_{\p}(\beta\beta') = e(\p_{\infty}/P_{\infty})\ord_{P_{\infty}}(BZ^2) = e (\ord_{P_{\infty}}(B) + 2\ord_{P_{\infty}}(Z)).\]
	Assume first that $P_{\infty}$ divides $A$ but not $B$ (without loss of generality). Then $A$ is of odd degree and $P_{\infty}$ ramifies in $K_A/k$. As $B$ is of even degree $\ord_{P_{\infty}}(B)\equiv 0 \mod2$ and we find $\ord_{\p_{\infty}}(\beta) \equiv 0 \mod2$. When both $\deg(A)$ and $\deg(B)$ are odd we allow ramification of the inifinite prime. In the case they are both even the previous valuation argument does not hold, as the ramification index of $\p_{\infty}\mid P_{\infty}$ in $K_A$ is $1$. The result is that $\ord_{\p_{\infty}}(\beta)$ is even if and only if $\deg(B)/2 \equiv \deg(Z) \mod{2}$. As this is not guaranteed by the primitive solution to \cref{eq: norm solvability}, we need to apply a quadratic twist to $F_{A,B}$ whenever $\ord_{\p_{\infty}}(\beta)$ is odd. Twisting with an odd degree irreducible polynomial $Q \in \F_q[x]$ causes the desired change in parity and hence non-ramification of $\p_{\infty}$. However, it also causes potential ramification at primes $\q\mid Q$ in $k(\sqrt{AB})$. If there is an odd degree irreducible in $\gcd_{\infty}(A,B)$ we can take that one. Otherwise we have to allow for some $Q$ outside $\gcd_{\infty}(A,B)$ to be ramified. By the Chebotarev density theorem we can take it to satisfy \cref{eq: extra ramification requirement}.
\end{proof}

From the proof above it is not at all clear when the extra twist at $Q$ is required in \cref{bad case} of \cref{def minimal ramification over C}. We have for example the following lemma, which shows that there may exist extension $F_{A,B}$ that are unramified outside $\gcd_{\infty}(A,B)$ even in the `bad case'.

\begin{lemma}\label{lem: bad case still good}
	Let $A,B,C \in k^*/k^{*^2}$ satisfy the conditions of \cref{def minimal ramification over C} such that $A$ and $B$  are both non-trivial, have even degree,  and $G:=\gcd_{\infty}(A,B)$ contains only irreducible parts of even degree.  Then there exists an extension $F_{A,B}$ that is minimally ramified with respect to $C$ and unramified outside $\gcd_{\infty}(A,B)$ if one of the following holds:
	\begin{itemize}[noitemsep]
		\item  $A$ and $B$ are coprime and $D=AB$ is a decomposition of the second type,
		\item  $B\mid A$ and $D = A/B \cdot B$ is a decomposition of the second type,
		\item	$A\mid B$ and $D = B/A \cdot A$ is a decomposition of the second type.
	\end{itemize}
\end{lemma}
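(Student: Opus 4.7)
The plan is to use the cyclic unramified quartic extension provided by the decomposition-of-second-type hypothesis (via \cref{lem: decomposition 2nd type ffields}) as the source of a well-behaved solution to a norm conic, which then yields the desired $F_{A,B}$.

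In the coprime case, $D := AB$ is squarefree and $K_D = K$, so \cref{lem: decomposition 2nd type ffields} directly supplies a cyclic quartic extension $F \subset H_{K_D}$ containing $E$ and unramified at every finite prime. Since $\deg(D)$ is even, $P_\infty$ is unramified in $K/k$, and the infinite primes of $K$ split completely in $H_{K_D}/K$; hence $F/K$ is unramified at $P_\infty$ as well. Taking $F_{A,B} := F$ gives an extension unramified outside $\gcd_\infty(A,B) = 1$, as required.

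For the remaining two cases I treat $B \mid A$ (the case $A \mid B$ is symmetric by swapping $A$ and $B$). Now $D := A$ is squarefree, and the decomposition $D = (A/B) \cdot B$ of the second type yields via \cref{lem: decomposition 2nd type ffields} a cyclic quartic $F' \subset H_{K_A}$ containing $E$ and unramified over $K_A$. Applying \cref{lem: finding F} with base $k$ and parameters $a = A/B$, $b = B$, we obtain a primitive solution $(X_0, Y_0, Z_0) \in k^3$ of $X^2 - (A/B)Y^2 - BZ^2 = 0$ with $F' = E(\sqrt{\beta'})$, $\beta' = X_0 + Y_0\sqrt{A/B}$. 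Since $F'/K_A$ is unramified at $P_\infty$, the infinite-prime parity analysis from the proof of \cref{lem: min ramified exists} (applied with $A/B$ in place of $A$) forces $\deg(Z_0) \equiv \deg(B)/2 \pmod{2}$.

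To conclude, I seek a primitive solution $(X_1, Y_1, Z_1) \in k^3$ of $X^2 - AY^2 - BZ^2 = 0$ with the \emph{same} parity $\deg(Z_1) \equiv \deg(B)/2 \pmod{2}$. Such a solution exists by Hasse-Minkowski (using $(A,B)_P = 1$ for all $P$), but the parity constraint is not automatic. Once it is obtained, $F_{A,B} := E(\sqrt{X_1 + Y_1\sqrt{A}})$ is a cyclic quartic extension of $K$ by \cref{lem: finding F}, unramified at finite primes outside $\gcd(A,B) = B$ by the finite-prime analysis in the proof of \cref{lem: min ramified exists}, and unramified at $P_\infty$ by the parity, hence unramified outside $\gcd_\infty(A,B) = B$ with no extra ramified prime $Q$ needed. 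The main obstacle is producing $(X_1, Y_1, Z_1)$ with the correct parity from $(X_0, Y_0, Z_0)$: naive scalings give only a solution to $X^2 - (AB)Y^2 - BZ^2 = 0$, which is the starting conic in disguise since $AB \equiv A/B \pmod{k^{*2}}$. My plan is to exploit that both target conics are smooth and have $k$-rational points, hence are each isomorphic to $\P^1_k$, and to set up an explicit birational correspondence between them -- informed by the identity $\sqrt{A} = \sqrt{A/B} \cdot \sqrt{B}$ in $E$ -- that preserves the $\deg Z$-parity modulo $2$. Carefully bookkeeping this parity through the rational map is where I expect the technical work to lie.
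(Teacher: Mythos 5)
Your coprime case is correct and is exactly the paper's argument: the hypothesis supplies a cyclic quartic $F$ of $K_D=K=k(\sqrt{AB})$ inside the Hilbert class field, the infinite primes split in $H/K_D$ because $\deg(AB)$ is even, and so $F/K$ is unramified everywhere. The gap is in the two divisibility cases, where your argument stops at a plan. You correctly reduce everything to exhibiting a primitive solution $(X_1,Y_1,Z_1)$ of $X^2-AY^2-BZ^2=0$ with $\deg(Z_1)\equiv\deg(B)/2\bmod 2$, but the mechanism you propose for producing it --- a ``birational correspondence'' between this conic and $X^2-(A/B)Y^2-BZ^2=0$ that preserves the parity of $\deg Z$ --- is never constructed, and it is not clear that one exists: the two conics are each isomorphic to $\P^1_k$ once they have a rational point, but there is no canonical map between them, and an arbitrary composite of parametrizations carries no control on $\deg Z$ modulo $2$. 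As written, the cases $B\mid A$ and $A\mid B$ are not proved.

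For comparison, the paper disposes of all three cases in one line: it takes the cyclic quartic $F$ inside the Hilbert class field of $k(\sqrt{D})$ provided by \cref{lem: decomposition 2nd type ffields}, observes that $F/E$ is unramified because $k(\sqrt{D})\subset E$, and that $K\subset E$ ramifies only at primes dividing $\gcd(A,B)$, so $F/K$ is unramified outside $\gcd_{\infty}(A,B)$ (with nothing happening at infinity since all degrees in sight are even and the infinite primes split in the Hilbert class field). Your reluctance to use this $F$ directly does point at something real: in the divisibility cases that $F$ is cyclic over $k(\sqrt{D})=k(\sqrt{A})$, whereas \cref{def: minimal ramification} asks for an extension cyclic over $K=k(\sqrt{AB})=k(\sqrt{A/B})$ arising from the conic $X^2-AY^2-BZ^2=0$; these requirements coincide only in the coprime case, so the paper's one-liner elides exactly the issue you are worrying about. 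But identifying the friction is not resolving it. To complete your route you need an actual proof of the parity statement --- most plausibly a class-field-theoretic one, deducing from the decomposition-of-the-second-type hypothesis that some quartic extension cyclic over $K$ and containing $E$ is unramified at infinity --- or else you should use the paper's $F$ and justify why it meets (or may be substituted for) the definition.
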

\begin{proof}
	In each of these cases, we can use \cref{item 1} of \cref{lem: decomposition 2nd type ffields} to find an extension $F_{A,B}$ that lies in the Hilbert class field of $D$ and is therefore unramified over $k(\sqrt{D})$. Because $k(\sqrt{AB}) \subset k(\sqrt{D})$ is unramified outside $\gcd(A,B)$ we can take $F_{A,B}$ as the desired extension.
\end{proof}
\begin{remark}
	In the case that there is at least one odd degree irreducible polynomial dividing $AB$, the conditions of \cref{lem: bad case still good} can be changed to 
		\begin{itemize}[noitemsep]
		\item  $A$ and $B$ are coprime, or
		\item  $B\mid A, \Leg{A/B}{P}=1$ for all $P\mid B$ and the class group of $k(\sqrt{A})$ has $r_2>0$, or 
		\item	$A\mid B, \Leg{B/A}{P}=1$ for all $P\mid A$ and the class group of $k(\sqrt{B})$ has $r_2>0$.
	\end{itemize}
	Note that in each case we don't have to concern ourselves with an irregular ambiguous ideal. Because we assumed that $(A,B)_P=1$ for all $P\leq \infty$ the requirements above ensure that \cref{item 3} of \cref{lem: decomposition 2nd type ffields} is satisfied.
\end{remark}

\begin{example}\label{ex: hard case}
	Contrary to number fields \cite[Lemma 7.7]{Stevenhagen1}, there does not always exists a cyclic quartic extension of $k(\sqrt{AB})$ that is unramified outside $\gcd(A,B)$. Consider for example monic polynomials $A = x^4+2x^3+x+1$ and $B= +x+2$ over the finite field $\F_3$.
	There exists a solution to 
	\[X^2=AY^2+BZ^2\]
	by taking $X=x^3+x^2+x, Y=1, Z=x^2+2x+1$. The extension $k(\sqrt{A}, \sqrt{\beta}, \sqrt{\beta'})/k(\sqrt{AB})$ is ramified at the infinite prime. Because $\gcd(A,B)=1$ any twist of $\beta$ (see \cref{lem: finding F}) will cause new ramification. The extra ramified prime that has to satisfy \cref{eq: extra ramification requirement} is thus a necessity for function fields. More examples of this type can be generated using the Magma code in \cref{app: magma}
\end{example}

\subsection{The R\'edei symbol}
Using the existence of minimally ramified extensions we can define the general version of the R\'edei symbol.

\begin{definition}[R\'edei symbol, version 2]\label{def: Redei symbol}
	Let $A,B,C \in k^*/k^{*2}$ be non-trivial classes represented by squarefree polynomials  $A,B,C \in \F_q[x]$ that satisfy
	\begin{align}
		&\text{gcd}_{\infty}(A,B,C)=1,\\
		&(A, B)_P = (A,C)_P = (B, C)_P = 1 \quad\text{ for all primes $P$.}
	\end{align}
	Let $F_{A,B}/K$ be minimally ramified over $E$ with respect to $C$. Then the \emph{R\'edei symbol }
	\[[A,B,C] \in \Gal(F_{A,B}/E) \cong \{\pm1\}\]
	is defined as
	\begin{equation}\label{eq: Redei symbol}
		[A,B,C] = \begin{cases}
			\Art{\cc}{F_{A,B}}{K} &\text{ if } 2\mid \deg(C),\\
			\Art{\infty\cc}{F_{A,B}}{K} &\text{ if } 2\nmid \deg(C),
		\end{cases}
	\end{equation}
	where the ideal $\cc$ is an integral $\O_K$-ideal of norm $C$ and $\infty$ is an infinite prime of $K$. If one of $A,B,C$ is trivial in $k^*/k^{*2}$ we define $[A,B,C]=1$.
\end{definition}
Note that this definition also captures the special case $A=B$, where the minimally ramified extension $F$ is cyclic over $k$. We now check that \cref{def: Redei symbol} is well-defined, i.e. that it does not depend on the choice of $\cc$ and that $[A,B,C]$ indeed lies in $\Gal(F_{A,B}/E)$.

Let $A,B,C$ be non-trivial squarefree polynomials satisfying the conditions of \cref{def: Redei symbol}. Let $F/K$ be minimally ramified over $E = k(\sqrt{A}, \sqrt{B})$ with respect to $C$, which exists by \cref{lem: min ramified exists}. Consider a prime $P$ dividing $C$. Then $P$ splits or ramifies in $K_A$ and $K_B$ by \cref{redei cond 2}, and is unramified in at least one of them by \cref{redei cond 1}. Thus a prime $\p_K \mid  P$ in $K$ will have degree $1$, and is split in the extension $E/K$. The prime $\p_K$ is also unramified in $F/K$ since $F$ is minimally ramified with respect to $C$. Thus $\Art{\p_K}{F}{K} \in \Gal(F/E)$ is well-defined. Since $\Gal(F/E)$ is contained in the center of $\Gal(F/k)$, $\Art{\p_K}{F}{K}$ is also well-defined as element in $\Gal(F/k)$ and only depends on $F$ and $P$, and not on the choice of $\p_K \mid  P$. For such a $P$ dividing $C$ we can therefore define the \emph{local R\'edei symbol}
\begin{equation}\label{eq: local artin symbol}
	[A,B,C]_{F,P} := \Art{\p_K}{F}{K} \in \Gal(F/E).
\end{equation}
Whenever $P$ does not divide $C$ we set $[A,B,C]_{F,P} := \id_F$. The local symbol is also defined for the infinite prime. The R\'edei symbol can now be written as a product of its local parts
\[ [A,B,C] = \prod_{P\leq \infty} [A,B,C]_{F,P} \in \Gal(F/E).\]
There are only finitely many non-trivial elements in the infinite product, namely the parts for $P$ dividing $C$, so the infinite product is a well-defined element in $\Gal(F/E)$. Moreover, this shows that the R\'edei symbol does not depend on the choice of ideal $\cc$ but only on $C$.

The local R\'edei symbols can be interpreted in the following way. A prime $\p_K$ splits in $E/K$ whenever $P$ divides $C$, so $[A,B,C]_{F,P}$ can be calculated by the Artin symbol $\Art{\p_E}{F}{E}$ for some prime $\p_E \mid P$ in $E$, implying that $[A,B,C]$ lies in $\Gal(F/E)$. Since $\p_E$ is unramified in $F/E$, its norm $N_{E/K_A}(\p_E) = \p$ in $K_A$ is unramified in at least one of the extensions $K_A(\sqrt{\beta})$ and $K_A(\sqrt{\beta'})$. Replacing $\p$ with a conjugate prime in $K_A$ if necessary, we may assume that $\p$ is unramified in $K_A(\sqrt{\beta})/K_A$. The $P$-part of the R\'edei symbol can now be calculated as
\[[A,B,C]_{F,P} = \Art{\p}{K_A(\sqrt{\beta})}{K_A}.\]
Essentially, $[A,B,C]_{F,P}$ is the quadratic symbol $\Leg{\beta}{\p}$ in the quadratic field $K_A$. We can also write the $P$-part of the R\'edei symbol as the Hilbert symbol

\begin{equation}\label{eq: redei as hilbert symbol}
	[A,B,C]_{F,P} = (\beta, \pi)_{\p},
\end{equation}
where $\pi$ is a uniformiser of $K_A$ at $\p$. 

\begin{remark}
	When $\deg(A)$ and $\deg(B)$ are even, we may end up in the special case of adding a quadratic twist $\beta \to Q\beta$ for some irreducible $Q\in \F_q[x]$. By the assumption that $\Leg{Q}{P}=1$ for all finite $P$ dividing $C$, we know obtain $(Q, \pi)_{\p} = 1$ for any choice of $\p\mid P$ in $K_A$. Thus the twisting by $Q$ has no influence on the local R\'edei symbols for any finite prime $P$.
\end{remark}

\begin{remark}
	The infinite part of the R\'edei symbol only needs to be calculated when $C$ has odd degree and thus at least one of $A,B$ has even degree. A prime at infinity in $K$ is also unramified in $F/K$. Thus $K_A(\beta)/K_A$ is also unramified at infinity (after potentially taking conjugates) and by the same reasoning as above the $\infty$-part of the R\'edei symbol can also be calculated as a quadratic Hilbert symbol
	\[ [A,B,C]_{F, \infty} = (\beta, \pi)_{\p_{\infty}} \in \{\pm1\}\]
	where $\pi$ is a uniformiser at $\p_{\infty}$. Here we may also have to apply a twist $\beta \to Q\beta$.
\end{remark}
Before moving to the reciprocity law, we give a property of the R\'edei symbol.
\begin{proposition}\label{prop: rivial special symbol}
Suppose that $D \in \F_q[x]$ is a squarefree polynomial with at least one irreducible component of odd degree. Let $D = D_1D_2$ be a decomposition of the second type as in \cref{lem: decomposition 2nd type ffields}. Then the R\'edei symbol $[D_1, D_2, -D_1D_2]$ is defined and trivial. 
\end{proposition}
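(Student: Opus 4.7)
The plan is to first verify that the symbol is well-defined, and then exhibit a choice of data that forces the Artin symbol to be trivial.

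For definedness I would check the hypotheses of \cref{def: Redei symbol}. Since $D$ is squarefree, $D_1$ and $D_2$ are coprime, so $\gcd(D_1, D_2, -D_1D_2) = 1$ at every finite prime; a short case analysis on the parities of $\deg D_1$ and $\deg D_2$ shows that $\deg D_1$, $\deg D_2$ and $\deg(-D_1D_2)$ cannot all be odd simultaneously, so the infinite prime is excluded too. The hypothesis that $D$ has at least one irreducible factor of odd degree gives $s_1 > 0$, which by \cref{eq: delta def} rules out an irregular ambiguous ideal class, so the remark following \cref{lem: decomposition 2nd type ffields} applies and yields $(D_1, D_2)_P = 1$ for every prime $P$. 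Combining this with the identity $(a, -a)_P = 1$ and bilinearity of the Hilbert symbol gives $(D_i, -D_1 D_2)_P = 1$ for $i = 1, 2$.

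Next I would choose the minimally ramified extension $F$ inside the Hilbert class field $H$ of $K_D$. By the first characterisation in \cref{lem: decomposition 2nd type ffields}, the decomposition of the second type $D = D_1 D_2$ gives rise to a cyclic quartic extension $K_D \subset F \subset H$ containing $E = k(\sqrt{D_1}, \sqrt{D_2})$. After relabelling we may assume $D_2$ is monic of even degree, so the parity of $\deg D_1$ matches that of $\deg D$. When $\deg D_1$ is odd we land directly in the good case of \cref{def minimal ramification over C}, since $F \subset H$ is unramified over $K_D$. When $\deg D_1$ is even we are a priori in the bad case of the definition, but \cref{lem: bad case still good}, applied to the coprime pair $(D_1, D_2)$, still produces an $F$ unramified outside $\gcd_\infty(D_1, D_2) = 1$, so we again fall into the good case.

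With $F \subset H$ in hand I would evaluate the symbol directly. The element $\sqrt{D} \in \O_{K_D}$ generates a principal ideal which, by unique factorisation, equals $(\sqrt{D}) = \prod_i \p_i$, where $\p_i$ is the ramified prime of $K_D$ above the $i$-th irreducible factor of $D$; its norm is the $\F_q[x]$-ideal $(D) = (-D_1 D_2)$, so $\cc := (\sqrt{D})$ is an admissible choice. Since $F/K_D$ is unramified and $\cc$ is principal, Artin reciprocity gives $\Art{\cc}{F}{K_D} = 1$. When $\deg(-D_1 D_2)$ is odd the definition also includes an infinite factor, but then $P_\infty$ ramifies in $K_D/k$ with a unique lift that splits completely in $H/K_D$ by the very definition of the Hilbert class field, so this contribution is trivial as well. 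Combining, $[D_1, D_2, -D_1 D_2] = 1$.

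The step that most requires care is the construction of $F$ when both $\deg D_i$ are even: \cref{def minimal ramification over C} a priori forces an extra ramified prime, and it is not obvious that a fully unramified $F$ still exists. This is exactly what \cref{lem: bad case still good} delivers, using that $D_1, D_2$ are coprime and that $D = D_1 D_2$ is a decomposition of the second type; without these hypotheses the identity would fail because the extra twist would contribute a non-trivial Artin symbol.
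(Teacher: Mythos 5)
Your proof is correct and follows essentially the same route as the paper: verify conditions \eqref{redei cond 1} and \eqref{redei cond 2} via Hilbert-symbol manipulations, choose the minimally ramified $F$ inside the Hilbert class field of $K_D$ using \cref{item 1} of \cref{lem: decomposition 2nd type ffields}, and evaluate on the principal ideal $\cc=(\sqrt{D})$ via Artin reciprocity, with the infinite contribution vanishing because infinite primes split completely in $H/K_D$. Your explicit appeal to \cref{lem: bad case still good} when both $\deg D_1$ and $\deg D_2$ are even makes precise a step the paper's proof passes over more quickly, but the underlying argument is the same.
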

\begin{proof}
	The fact that $D_1D_2$ is a decomposition of the second type implies that $(D_1, D_2)_P=1$ for all primes $P$ by \cref{item 3} of \cref{lem: decomposition 2nd type ffields}. Indeed, for finite primes $P\nmid D$ we have $(D_1, D_2)_P = 1$. When $P$ divides $D$, say $D_1$, it holds that $(D_1, D_2)_P = \Leg{D_2}{P}=1$. The vanishing of the symbol at infinity follows from the product formula \cite[theorem $X.ii, 4.13$]{Artin-Tate}. Multiplying $(D_1, D_2)_P$ with the trivial symbols $(D_1, -D_1)_P$ and $(-D_2, D_2)_P$ implies $(D_1, -D_1D_2)_P= (-D_1D_2, D_2)_P=1$ so that the triple satisfies \cref{redei cond 1}. Condition \ref{redei cond 2} is also satisfied because at most one of $D_1$ and $D_2$ is of odd degree and $D$ is a squarefree polynomial. Thus $[D_1, D_2, -D_1D_2]$ is a well-defined R\'edei symbol. 
	
	Since $D_1$ and $D_2$ are relatively prime and $D$ contains an irreducible component of odd degree, a minimally ramified extension $F$ with respect to $-D_1D_2$ is unramified over $K = k(\sqrt{D})$. We can even take $F$ to lie in the Hilbert class field by \cref{lem: decomposition 2nd type ffields}. Consider the ideal $\cc = (\sqrt{D})$ of norm $-D$. This ideal is trivial in the class group $C(D)$ of $K$. Because $F$ is a subextension of the Hilbert class field of $K$, we use Artin reciprocity to find that $\cc$ has trivial Artin symbol in the extension $F/K$. Since the infinite primes split completely in $F/K$, the Artin symbol at infinity is also trivial.

\end{proof}

\subsection{R\'edei reciprocity}\label{subsec: Redei reciprocity}
In this subsection we prove the main result about R\'edei symbols, which by definition is symmetric in its first two arguments. We will show that the symbol is symmetric in all three of its arguments by proving the following theorem.

\begin{theorem}\label{thm: Redei reciprocity}
	For $A,B,C \in k^*/k^{*2}$ satisfying the conditions of \cref{def: Redei symbol}, the R\'edei symbol is multiplicative in each of its arguments and satisfies
	\[[A,B,C] = [B,A,C] = [A,C,B].\]
	We call this the \emph{R\'edei reciprocity law}.
\end{theorem}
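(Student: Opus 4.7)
The plan is to establish the two claimed equalities separately and then derive multiplicativity from one of them. The first equality $[A,B,C] = [B,A,C]$ is essentially tautological from the construction: the fields $K = k(\sqrt{AB})$ and $E = k(\sqrt{A},\sqrt{B})$ are manifestly symmetric in $A$ and $B$, and the minimally ramified extension $F_{A,B}/K$ produced by \cref{lem: min ramified exists} admits the symmetric description $F_{A,B} = E(\sqrt{\beta}) = E(\sqrt{\alpha})$ where $\beta$ and $\alpha$ arise symmetrically from the same non-trivial solution of $X^2 - AY^2 - BZ^2 = 0$. Hence evaluating the Artin symbol on $\cc$ (or $\infty\cc$) gives the same element of $\Gal(F_{A,B}/E) \cong \mvs$ under either labelling.

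For the substantive equality $[A,B,C] = [A,C,B]$, the strategy is to reduce both sides to products of local Hilbert symbols and invoke Hilbert reciprocity. Recall from the remark following \cref{def: Redei symbol} that for each finite prime $P \mid C$ the local Rédei symbol admits the expression
\begin{equation*}
[A,B,C]_{F,P} = (\beta, \pi)_{\p},
\end{equation*}
computed in the completion of $K_A = k(\sqrt{A})$ at a prime $\p$ above $P$, with a parallel statement at the infinite place when $\deg C$ is odd. Using that the $K_A/k$-norm of $\beta$ equals $BZ^2 \equiv B \pmod{k^{*2}}$, together with compatibility of the Hilbert symbol under norms, each local factor may be rewritten as a Hilbert symbol on $k$ involving $B$ and $C$. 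Collecting over all places and applying the product formula $\prod_{P \le \infty}(\cdot,\cdot)_P = 1$ for Hilbert symbols on $k$, the symbol $[A,B,C]$ can be transformed into an expression symmetric in $B$ and $C$, which one identifies with the analogous expression for $[A,C,B]$ built from a non-trivial solution of $X^2 - AY^2 - CZ^2 = 0$. The pairwise triviality of Hilbert symbols among $A,B,C$ from \cref{redei cond 2} is used throughout to cancel unwanted contributions at primes dividing $AB$.

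Once reciprocity is established, multiplicativity in the third argument follows at once from the multiplicativity of $\Art{\cc}{F}{K}$ in $\cc$, and the symmetry then transports it to the first two arguments.

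The main obstacle will be the disciplined book-keeping of the corner cases built into \cref{def: minimal ramification} and \cref{def minimal ramification over C}. First, the extra ramified prime $Q$ arising in the bad case requires verifying that the twist $\beta \mapsto Q\beta$ does not disturb any of the local Rédei symbols at primes dividing $C$; this is exactly where the hypothesis $\Leg{Q}{P} = 1$ for all $P \mid ABC$ enters, via $(Q,\pi)_{\p} = 1$. Second, the infinite place demands a separate but parallel treatment whenever $\deg C$ is odd, in contrast to number fields where the archimedean contribution plays a slightly different role. Third, the degenerate case $A = B$ is handled via \cref{lem: finding F special case} rather than \cref{lem: finding F}, but the element $\beta$ again has norm in $A \cdot k^{*2}$ so the same Hilbert-symbol reformulation applies. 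Checking uniformly that each of these configurations remains compatible with the product-formula argument is the principal technical burden.
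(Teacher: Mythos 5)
Your overall architecture (first symmetry tautological from the construction; second symmetry by a local-to-global argument and a Hilbert product formula; multiplicativity read off from the decomposition into $P$-parts) matches the paper, but the central step as you state it does not work. You claim that each local factor $[A,B,C]_{F,P}=(\beta,\pi)_{\p}$ can be ``rewritten as a Hilbert symbol on $k$ involving $B$ and $C$'' using that $N_{K_A/k}(\beta)\equiv B \mod{k^{*2}}$ and norm compatibility. Norm compatibility only gives $\prod_{\p\mid P}(\beta,c)_{\p}=(B,c)_P$ for $c\in k^*$, i.e.\ it controls the \emph{product} over all primes of $K_A$ above $P$, whereas the local R\'edei symbol is a single factor $(\beta,\pi)_{\p_1}$ at one chosen prime above $P$; that single factor genuinely depends on $\beta$ itself and not only on its norm. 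Indeed, if $[A,B,C]$ descended to an expression in Hilbert symbols of $B$ and $C$ over $k$, it would be forced to be trivial by condition \ref{redei cond 2}, and the symbol would carry no $8$-rank information. So the proposed reduction to the product formula on $k$ cannot be carried out.

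The paper's proof instead works in $K_A$ and pairs the two auxiliary elements against each other: with $F=E(\sqrt{\beta})$ minimally ramified with respect to $C$ and $F'=E'(\sqrt{\gamma})$ minimally ramified with respect to $B$ (so that $\beta,\gamma\in K_A$ have norms $B$ and $C$ modulo squares), one proves the local identity $[A,B,C]_{F,P}\,[A,C,B]_{F',P}=\prod_{\p\mid P}(\beta,\gamma)_{\p}$ (\cref{lem: local reciprocity}) by a case analysis according to whether $P$ divides neither, exactly one, or both of $B$ and $C$, plus a separate check at the twisting prime $Q$; the product formula is then applied in $K_A$ to the pair $(\beta,\gamma)$. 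Your proposal never brings $\gamma$ into the computation, and this case analysis --- which is where the hypotheses $(A,B)_P=(A,C)_P=(B,C)_P=1$ and the freedom to replace $\beta$ by its conjugate $\beta'$ actually get used --- is the missing content. Your treatment of the first equality, of the twist $\beta\mapsto Q\beta$ at primes dividing $C$, and of how multiplicativity follows is consistent with the paper.
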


In order to prove R\'edei reciprocity, let us give an overview of all the field extensions. Choose a minimally ramified extension $F = E(\sqrt{\beta})$ over $E$ of $K = k(\sqrt{AB})$ with respect to $C$. Similarly, choose a minimally ramified extension $F' = E'(\sqrt{\gamma})$ over $E'$ of $K' = k(\sqrt{AC})$ with respect to $B$. We obtain local symbols $[A,B,C]_{F, P}$ and $[A,C,B]_{F', P}$ for all primes $P$. The elements $\beta, \gamma \in K_A$ have norm $B,C\mod{k^{*^2}}$ respectively. The diagram of extensions can be seen in \cref{fig: diagram}.
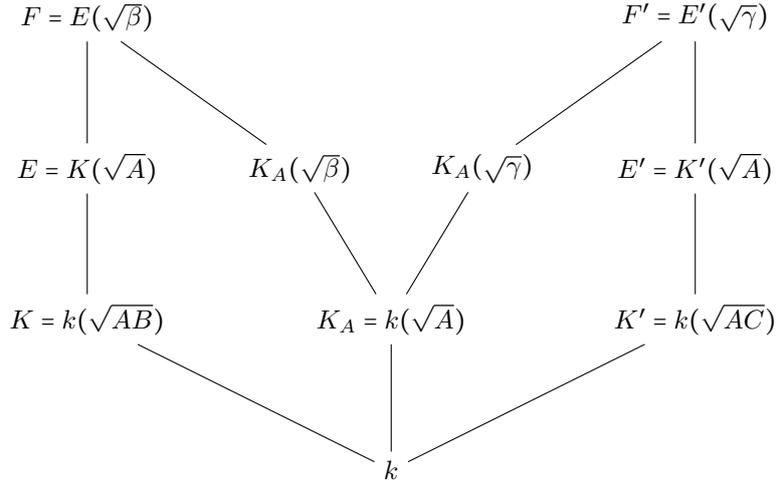
\begin{figure}[h]
\begin{center}
	\begin{tikzpicture}
		\node (Q1) at (0,-1) {$k$};
		\node (Q2) at (-4, 1) {$K = k(\sqrt{AB})$};
		\node (Q3) at (0,1) {$K_A = k(\sqrt{A})$};
		\node (Q4) at (4,1) {$K' = k(\sqrt{AC})$};
		\node (Q5) at (-4,3) {$E = K(\sqrt{A})$};
		\node (Q6) at (4,3) {$E' = K'(\sqrt{A})$};
		\node (Q7) at (-4,5) {$F = E(\sqrt{\beta})$};
		\node (Q8) at (4,5) {$F' = E'(\sqrt{\gamma})$};
		\node (Q9) at (-1.2, 3) {$K_A(\sqrt{\beta})$};
		\node (Q10) at (1.2,3) {$K_A(\sqrt{\gamma})$};
		
		\draw (Q1)--(Q2);
		\draw (Q1)--(Q3);
		\draw (Q1)--(Q4);
		\draw (Q2)--(Q5);
		\draw (Q4)--(Q6);
		\draw (Q5)--(Q7);
		\draw (Q6)--(Q8);
		\draw (Q3)--(Q9);
		\draw (Q3)--(Q10);
		\draw (Q9)--(Q7);
		\draw (Q10)--(Q8);
	\end{tikzpicture}
\end{center}
\caption{: The field extensions used in \cref{lem: local reciprocity}.}{\label{fig: diagram}}
\end{figure}
\begin{lemma}\label{lem: local reciprocity}
	Let $A,B,C \in k^*/k^{*^2}$ satisfy the conditions of \cref{def: Redei symbol} and let $F = E(\sqrt{\beta})$ minimally ramified over $E$ with respect to $C$ and $F' = E'(\sqrt{\gamma})$ minimally ramified over $E'$ with respect to $B$. For all primes $P$ of $k$ it holds that
	\begin{equation}\label{eq: local reciprocity}
		[A,B,C]_{F,P}[A,C,B]_{F',P} = \prod_{\p \mid P \text{ in } K_A} (\beta, \gamma)_\p.
	\end{equation}
\end{lemma}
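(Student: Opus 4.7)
The plan is to reduce both sides to quadratic Hilbert symbols on the completions $K_{A,\p}$ for primes $\p \mid P$ in $K_A$, and then to match them using bilinearity of the Hilbert symbol together with the norm relations $\beta\beta' \equiv B$ and $\gamma\gamma' \equiv C \pmod{k^{*2}}$ provided by \cref{lem: finding F}. The starting point is the identification, already derived in the text just after \cref{eq: redei as hilbert symbol}, that $[A,B,C]_{F,P} = (\beta, \pi)_\p$ for a suitable $\p \mid P$ in $K_A$ at which $K_A(\sqrt{\beta})/K_A$ is unramified (and analogously for $[A,C,B]_{F',P}$). Note that if $P \nmid C$ (respectively $P \nmid B$) then $[A,B,C]_{F,P}$ (respectively $[A,C,B]_{F',P}$) equals $1$ by definition.

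The core computation is the following bilinearity identity in each completion $K_{A,\p}$:
\[ (\beta, \gamma)_\p \cdot (\beta', \gamma')_\p = (\beta\beta', \gamma)_\p \cdot (\beta', \gamma'\gamma^{-1})_\p = (B, \gamma)_\p \cdot (\beta', C)_\p, \]
using $\beta\beta' \equiv B$ and $\gamma'/\gamma = \gamma'^2/(\gamma\gamma') \equiv C \pmod{K_A^{*2}}$. From here the proof splits by the behaviour of $P$ in $K_A$. If $P$ splits as $\p_1\p_2$ with $\sigma\p_1=\p_2$, Galois equivariance of the Hilbert symbol gives $(\beta',\gamma')_{\p_1} = (\beta,\gamma)_{\p_2}$, and the identity above transforms the right-hand side into $(B,\gamma)_{\p_1}(\beta,C)_{\p_2}$. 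Since $P \mid B$ makes $B$ a uniformiser (up to units) at any $\p \mid P$, and similarly for $C$, one recognises these factors as precisely the local Rédei symbols $[A,C,B]_{F',P}$ and $[A,B,C]_{F,P}$. The inert case $P \nmid A$ is handled by the analogous identity in the single completion $K_{A,\p}$ (using that $(x,y)_\p = (\sigma x,\sigma y)_\p$ for the residual Galois action), and the trivial case $P \nmid ABC$ reduces to $1=1$ by using the tame formula and the relation $\operatorname{ord}_{\p_1}(\beta) + \operatorname{ord}_{\p_2}(\beta) \equiv \operatorname{ord}_P(B) \pmod 2$.

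The main obstacle will be the ramified case $P \mid A$, where $K_{A,\p}/k_P$ is ramified quadratic and the tame-symbol shortcut is unavailable, forcing one to work with the explicit formula $\operatorname{ord}_\p(\beta) = \frac{1}{2}\operatorname{ord}_\p(\beta\beta')$ from the proof of \cref{lem: min ramified exists} to track parities. The infinite prime requires a parallel treatment using uniformisers at $\p_\infty$ and the extra parity analysis from \cref{def: minimal ramification}. Finally, whenever the quadratic twist $\beta \mapsto Q\beta$ of the ``bad case'' in \cref{def minimal ramification over C} is active, one checks that it is invisible on both sides: the assumption $\Leg{Q}{P}=1$ for every finite $P \mid ABC$ forces $(Q,\cdot)_\p = 1$ at every $\p$ contributing non-trivially to either side, so the computation above is unaffected by the twist.
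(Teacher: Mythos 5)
Your proposal is correct and follows essentially the same route as the paper: both reduce the local R\'edei symbols to tame quadratic Hilbert symbols in the completions of $K_A$, run the same case analysis on which of $B$, $C$ the prime $P$ divides and on the splitting behaviour of $P$ in $K_A$, and dispose of the twist $Q$ via $\Leg{Q}{P}=1$; your bilinearity identity $(\beta,\gamma)_\p(\beta',\gamma')_\p=(B,\gamma)_\p(\beta',C)_\p$ is a compact repackaging of the paper's direct verification in the split case. The one point to make explicit is the one the paper states up front: identifying $(B,\gamma)_{\p_1}$ and $(\beta,C)_{\p_2}$ with the two local R\'edei symbols requires aligning which conjugate prime $\beta$ (resp.\ $\gamma$) is a unit at, and this freedom costs nothing because replacing $\beta$ by $\beta'$ changes $\prod_{\p\mid P}(\beta,\gamma)_\p$ only by $(B,\gamma\gamma')_P=(B,C)_P=1$.
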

\begin{proof}
	Denote the left- and right-hand side of \cref{eq: local reciprocity} by $L_P$ and $R_P$ respectively. Note that both sides of \cref{eq: local reciprocity} are symmetric in $B$ and $C$. We may also replace $\beta$ (or $\gamma$) in $R_P$ with its conjugate, because the product equals
	\[\prod_{\p\mid P} (\beta, \gamma)_{\p}\prod_{\p\mid P}(\beta', \gamma)_{\p} = \prod_{\p\mid P} (B, \gamma)_{\p} = (B,C)_P = 1.\]
	
	At most one of $F$ and $F'$ can require an extra twist $\beta \mapsto Q\beta$. Without loss of generality we may assume the twist, if it occurs, happens for $F$. Write therefore $\beta = Q\beta_0$ where $Q$ is either trivial when no twist is required or it is an irreducible polynomial satisfying \cref{eq: extra ramification requirement}. Let $P$ be a prime not equal to $Q$. By condition \ref{redei cond 2} $P$ divides at most $2$ of $A,B,C$. If $P$ divides $B$ it is either split or ramified in $K_A/k$. In the ramified case  $\beta_0$ is, up to squares, a uniformiser at $\p_1\mid  P$ in $K_A$. The twist with $Q$ does not matter because \cref{eq: extra ramification requirement} implies that $Q$ is a unit at $\p_1$. In the split case, $(P) = \p_1\p_2$ in $K_A$, $\beta$ is a unit at the other prime $\p_2$. When $P$ does not divide $B$, the fact that $F/K$ is minimally ramified implies that $\beta$ is a unit at primes $\p\mid  P$ in $K$, up to squares. Analogous statements hold for $C$ and $\gamma$.
	
	We are now ready to check that equality in \cref{eq: local reciprocity} holds for any prime $P$ not equal to $Q$.
	\begin{itemize}
		\item  Suppose first that $P$ does not divide $BC$. Then $L_P=1$ by definition of the symbol. The elements $\beta, \gamma$ are both units at all $\p\mid  P$, so that the symbols $(\beta, \gamma)_{\p}$ are all trivial, making $R_P =1$. 
		\item Next assume that $P$ divides exactly one of $B$ and $C$, say without loss of generality $P$ divides $C$. Then $\beta$ is a is a $P$-unit and $F$ is unramified over $P$, while $\gamma$ is a uniformiser at a prime $\p_1\mid P$ in $K_A$ up to squares. Thus we can apply \cref{eq: redei as hilbert symbol} and write $L_P = (\beta, \gamma)_{\p_1}$. In the case that $P$ ramifies in $K_A$ we have equality to $R_P$. In the split case both $\beta$ and $\gamma$ are units at $\p_2\mid  P$, giving a Hilbert symbol $(\beta, \gamma)_{\p_2}=1$ and the right hand side also equals $(\beta, \gamma)_{\p_1}$.
		\item Finally, when $P$ divides both $B$ and $C$, it cannot divide $A$ and we are in the split case in $K_A$. After replacing $\beta$ by its conjugate $\beta'$ if necessary, we may assume that $\beta$ is a unit at $\p_1$ and a uniformiser at $\p_2$, up to squares, while $\gamma$ is a uniformiser at $\p_1$ and a unit at $\p_2$. By \cref{eq: redei as hilbert symbol} we find 
		\[L_P = [A,B,C]_{F,P}[A,C,B]_{F',P} = (\beta, \gamma)_{\p_1} (\beta, \gamma)_{\p_2} = R_P.\]
	\end{itemize}
	When $Q$ is non-trivial, the local reciprocity in \cref{eq: local reciprocity} still holds at $Q$. As $Q$ does not divide $BC$ we get $L_Q = 1$ by definition. By \cref{eq: extra ramification requirement} $Q$ splits in $K_A$ and both $\beta_0$ and $\gamma$ are units at primes $\q\mid Q$ in $K_A$. Thus $R_Q$ equals
	\[ \prod_{\q \mid Q \text{ in } K_A} (\beta, \gamma)_\q =  \prod_{\q \mid Q \text{ in } K_A} (Q, \gamma)_\q(\beta_0, \gamma)_\q = (Q,C)_Q = 1 = L_Q.\]
\end{proof}

\paragraph*{Proof of \Cref{thm: Redei reciprocity}}
By \cref{lem: local reciprocity} the product of R\'edei symbols $[A,B,C][A,C,B]$ is the product local symbols as in \cref{eq: local artin symbol} over all primes $\p\leq \infty$ in $K_A$. We write
\[[A,B,C][A,C,B] = \prod_{P \leq \infty \text{ in }\F_q(x)} [A,B,C]_{F,P}[A,C,B]_{F',P} = \prod_{\p\leq \infty \text{ in } K_A} (\beta, \gamma)_{\p} = 1.\]
The last equality follows from the product formula. Hence we have symmetry of the R\'edei symbol in the last two arguments. By construction we also have symmetry in the first two arguments. The multiplicativity of the R\'edei symbol in $C$ follows from its construction as a product of all its $P$-parts. Multiplicativity in all its arguments then follows from the symmetry. \qed\linebreak

The reciprocity law gives us a faster way of showing that \cref{def: Redei symbol} is well-defined.
\begin{lemma}
	The definition of R\'edei symbols in \cref{def: Redei symbol} is independent of the choice of minimally unramified $F_{A,B}$ and ideal $\cc$.
\end{lemma}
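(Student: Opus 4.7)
The plan is to deduce well-definedness directly from the local decomposition written out immediately after \cref{def: Redei symbol} together with \cref{lem: local reciprocity}. Independence from the choice of integral ideal $\cc$ of norm $C$ is essentially already established: the paragraph following the definition shows that $[A,B,C]$ factors as a finite product
\begin{equation*}
	[A,B,C] \;=\; \prod_{P\leq\infty} [A,B,C]_{F,P}
\end{equation*}
of local symbols, each of which only depends on the prime $P$ (and on $F$), because $[A,B,C]_{F,P}$ is computed as $\Art{\p_K}{F}{K}$ for some $\p_K \mid P$, lies in $\Gal(F/E)$ (which is central in $\Gal(F/k)$), and is therefore independent of the choice of $\p_K\mid P$. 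Since the choice of $\cc$ only affects which $\p_K$ is picked above each $P\mid C$, the symbol does not depend on it.

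For independence from the choice of minimally ramified $F_{A,B}$, I would fix once and for all an auxiliary minimally ramified extension $F'$ over $E' = k(\sqrt{A},\sqrt{C})$ of $K' = k(\sqrt{AC})$ with respect to $B$; such an $F'$ exists by \cref{lem: min ramified exists}. For any competing minimally ramified extension $F = F_{A,B}$, write $F = E(\sqrt{\beta})$ and $F' = E'(\sqrt{\gamma})$ with $\beta,\gamma \in K_A$. Applying \cref{lem: local reciprocity} to $F$ and $F'$ at every prime $P$ of $k$ and multiplying, the product formula for Hilbert symbols in the global field $K_A$ yields
\begin{equation*}
	[A,B,C]_F \cdot [A,C,B]_{F'} \;=\; \prod_{\p\leq\infty \text{ in } K_A} (\beta,\gamma)_{\p} \;=\; 1.
\end{equation*}
Since both factors lie in $\{\pm 1\}$, this forces $[A,B,C]_F = [A,C,B]_{F'}$. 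The right-hand side is manifestly independent of $F$, so $[A,B,C]_F$ takes the same value for every minimally ramified choice $F_{A,B}$.

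The main thing to verify is the absence of circular reasoning: one might fear that \cref{lem: local reciprocity} tacitly relies on well-definedness of the symbol. Inspection of its statement shows, however, that it is phrased in terms of specific fixed extensions $F$ and $F'$ satisfying the minimal ramification condition and their associated elements $\beta,\gamma$; it makes no reference to any ambient value $[A,B,C]$ or $[A,C,B]$, so no circularity arises. The trivial case in which one of $A,B,C$ is a square is covered by the convention $[A,B,C]=1$ in \cref{def: Redei symbol}, completing the verification.
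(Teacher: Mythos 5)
Your proposal is correct and follows essentially the same route as the paper: $\cc$-independence from the local factorisation of the symbol, and $F_{A,B}$-independence from the reciprocity law $[A,B,C]=[A,C,B]$, which you simply unpack into its proof (\cref{lem: local reciprocity} plus the product formula over $K_A$) applied against one fixed auxiliary $F'$. Your explicit check that no circularity arises, and the observation that the same comparison against a fixed $F'$ handles the case $B=C$ uniformly (where the paper instead invokes $[A,B,B]=[B,B,A]$), are the only points at which you are more detailed than the paper's two-line argument.
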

\begin{proof}
	We already saw the independence of $\cc$ by writing $[A,B,C]$ as a product of $P$-symbols. The fact that it does not depend on the choice of $F_{A,B}$ follows from the reciprocity law $[A,B,C] = [A,C,B]$, because the right hand side is independent of $F_{A,B}$. This also works when $C=B$ by using that $[A,B,B] = [B,B,A]$.
\end{proof}

\section{Governing fields}\label{sec: governing fields}
As an application of R\'edei reciprocity, we will show the existence of a governing field for the $8$-rank whenever $D$ has an irreducible component of odd degree. This assumption is needed to avoid the case that there exists an extra irregular ambiguous ideal class. We recall the definition of a governing field from \cref{def: gov field intro}.

\begin{definition}[Governing fields]
	Let $q$ an odd prime and $D\in \F_q[x]$ squarefree. A Galois extension $\Omega_{2^j}(D)/\F_q(x)$ is called a \emph{governing field for the $2^j$-rank} of $C(DP)$ if the Frobenius conjugacy class of an unramified prime $P\in \F_q[x]$ in $\Gal(\Omega_{2^j}(D)/\F_q(x))$ completely determines the $2^k$-rank for all $k\leq j$.
\end{definition}

\begin{remark}
	In the original formulation for number fields, Cohn and Lagarias showed that if there exists a governing field for the $2^j$-rank, there is a unique one with smallest degree \cite[theorem 1.1]{Cohn-Lagarias}. The proof of this theorem also holds for function fields. We have not required $\Omega_j(D)$ to have the smallest degree, so we are considering \textit{a} governing field instead of \textit{the} governing field.
\end{remark}

Let $D$ be a squarefree polynomial. The governing field $\Omega_{2^0}(D) = \F_q(x)$ exists trivially. By \cref{thm: 2-rank} the $2$-rank of $C(DP)$, with $P$ an irreducible monic polynomial, depends on the leading coefficient of $D$, the parity of the degrees of the irreducible components of $D$, and the parity of $\deg(P)$. A governing field for the $2$-rank therefore only needs to govern the parity of $\deg(P)$. Since the splitting behaviour of $P$ in the extension $\F_q(x) = k \subset k(\sqrt{g})$ is determined by the quadratic symbol $\Leg{g}{P} = (-1)^{\deg(P)}$, we find that
\[\Omega_2(D) = k( \sqrt{g})\cong \F_{q^2}(x)\]
is a governing field for the $2$-rank of $C(DP)$.

By the R\'edei map from \cref{eq: Redei map}, the $4$-rank of $C(DP)$ depends on the Artin symbols $\Art{\a_j}{k( \sqrt{A_i})}{k}$ where the $\a_j$ generate the ambiguous ideal classes and the $\sqrt{A_i}$ generate the genus field $H_2$ in \cref{eq: genus field}. Recall from \cref{eq: R4 entries} that these Artin symbols are determined by genus characters. In particular it holds that
\[\Art{\a_j}{k( \sqrt{A_i})}{k} = \chi_{A_i}(N(\a_j)) = \Leg{A_i}{N(\a_j)},\]
whenever $A_i$ and $N(\a_j)$ are relatively prime.

Most of the generating ambiguous ideals $\a_j$ have a norm that is an irreducible polynomial $P_i\mid D$, except for the potential irregular ambiguous ideal class. In that case we can write $D=c(U^2-gV^2)$ with $\deg(U)>\deg(V)$ \cite{Artin}, so that the irregular class is generated by an ideal $\alpha$ with $N(\alpha) = U$ (or $V$). Note that $N(\alpha)$ is a nonsquare modulo every prime divisor of $D$, so the symbols $\Leg{P_i}{N(\alpha)}$ are fixed. The only symbol that might vary in the column of the R\'edei matrix for the ideal $\alpha$ is the quadratic symbol $\Leg{g}{N(\alpha)}$. For a fixed $D$, we have found that the $4$-rank of $C(DP)$ will be determined by
\begin{itemize}
	\item $\Leg{g}{P} \text{ and } \Leg{P_i}{P} \text{ for each } P_i\mid D \text{ monic irreducible}$, and
	\item $\Leg{g}{N(\alpha)} = (-1)^{\deg(N(\alpha))} = (-1)^{\deg(DP)/2}$ when $\sgn(D)=g$ and $DP$ consists of only irreducible parts of even degree.
\end{itemize}
The symbol $\Leg{g}{N(\alpha)}$, if applicable, is determined by the degree of $P$ modulo $4$. Since $2$-divisibility of $\deg(P)$ is governed by the splitting behaviour in $k(\sqrt{g})\cong \F_{q^2}(x)$, the $4$-divisibility will be governed a further splitting in the constant field extension $k( \sqrt[4]{g}) \cong \F_{q^4}(x)$.  We can conclude that a governing field for the $4$-rank of $C(DP)$ is given by
\[\Omega_4(D) = \begin{cases}
	k( \sqrt[4]{g}, \{\sqrt{P_i}: P_i\mid D \text{ monic irreducible}\}) &\text{ if }  \sgn(D) \text{ and each irreducible}\\
	&\qquad\text{part of $D$ is of even degree,}\\
	k(\sqrt{g}, \{\sqrt{P_i}: P_i\mid D \text{ monic irreducible}\}) &\text{ otherwise.} \\
	
\end{cases}\]
\begin{remark}
	The governing field that we have written above may not be the smallest governing field of $C(D)$.
\end{remark}

\begin{theorem}\label{thm: gov field 8rank ffields}
	Let $D = eP_1\dots P_s \in \F_q[x]$ a squarefree polynomial with all $P_i$ monic irreducible polynomials and assume at least one of the $P_i$ is of odd degree. Then $\Omega_8(D)$ exists.
\end{theorem}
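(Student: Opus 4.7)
With $\Omega_2(D)$ and $\Omega_4(D)$ already constructed, it suffices to exhibit a finite Galois extension of $\F_q(x)$ whose Frobenius at an unramified prime $P$ determines the rank of the $R_8$-matrix attached to $C(DP)$, and hence $r_8$. The strategy is to fix a Frobenius class of $P$ in $\Omega_4(D)$ --- inside which the $2$- and $4$-ranks, a chosen basis of $\ker(R_4)$, and the list of decompositions of $DP$ of the second type are all constant --- and then to rewrite each entry $[D_1, D_2, M]$ of $R_8$ as a product of simpler R\'edei symbols whose $P$-dependence passes through a Frobenius condition in a fixed extension of $k$.

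The hypothesis that some irreducible factor of $D$ has odd degree propagates to $DP$ regardless of $P$, which by \cref{eq: delta def} rules out an irregular ambiguous class in $C(DP)$; hence every entry of $R_8$ is a R\'edei symbol in the sense of \cref{def: Redei symbol}. Since $DP$ is squarefree, $P$ divides exactly one of $D_1, D_2$, and using the symmetry in the first two arguments we may assume $D_1 = D_1' P$ with $D_1' \mid D$ and $D_2 \mid D$; we also write $M = M' P^c$ with $M' \mid D$ and $c \in \{0,1\}$. Trilinearity and the reciprocity law of \cref{thm: Redei reciprocity} then expand $[D_1' P, D_2, M' P^c]$ into a product of three kinds of factors: (i) $P$-independent constants $[X,Y,Z]$ with $X,Y,Z \mid D$; (ii) symbols $[X,Y,P]$ with $X,Y \mid D$; and (iii) --- only when $c=1$ --- the symbol $[P,P,D_2]$. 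By \cref{def: Redei symbol} and \cref{lem: min ramified exists}, a factor of type (ii) is an Artin symbol at a prime above $P$ in a fixed minimally ramified extension $F_{X,Y}$, hence determined by the Frobenius of $P$ in the Galois closure $\widetilde F_{X,Y}/k$, an extension depending only on $D$.

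The main obstacle is the type-(iii) symbol $[P,P,X]$: by \cref{lem: finding F special case} it is an Artin symbol at an ideal of norm $X$ in a cyclic quartic extension $F_P/k$ containing $k(\sqrt P)$, and $F_P$ visibly depends on $P$. The plan is to apply reciprocity once more to rewrite $[P,P,X] = [X,P,P]$, so that $X$ now occupies the fixed first slot; computing the resulting symbol via the local formula \cref{eq: redei as hilbert symbol} expresses it as a product of Hilbert symbols at primes above $P$ in $k(\sqrt X)$, which --- by a Kummer-theoretic argument in the spirit of quartic reciprocity, using the odd-degree hypothesis on $D$ to avoid the extra twist permitted by \cref{def minimal ramification over C} --- is determined by the Frobenius of $P$ in a finite extension $L_X/k$ depending only on $X$. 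Taking $\Omega_8(D)$ to be the compositum of $\Omega_4(D)$ with all $\widetilde F_{X,Y}$ for $X,Y \mid D$ and all $L_X$ for $X \mid D$ then yields a finite Galois extension of $\F_q(x)$ whose Frobenius at $P$ controls every entry of $R_8$, hence $r_8(C(DP))$, which proves $\Omega_8(D)$ is a governing field for the $8$-rank.
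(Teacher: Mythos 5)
Your overall strategy --- fix the Frobenius of $P$ in $\Omega_4(D)$, express the entries of $R_8$ as R\'edei symbols, and use reciprocity to push the $P$-dependence into a Frobenius condition --- is the paper's strategy, but your execution has two genuine gaps, both at exactly the points where the paper does its real work. The first is the ``trilinear expansion'' of $[D_1'P, D_2, M'P^c]$ into factors $[X,Y,Z]$, $[X,Y,P]$ and $[P,P,D_2]$: multiplicativity of the R\'edei symbol only makes sense when every triple that appears satisfies \cref{redei cond 1} and \cref{redei cond 2}, and for a decomposition $DP = D_1'P\cdot D_2$ of the second type one only knows $(D_1'P, D_2)_Q = 1$ for all $Q$; the individual symbols $(D_1',D_2)_Q$ and $(P,D_2)_Q$ can both equal $-1$, in which case $[D_1',D_2,\cdot\,]$ and $[P,D_2,\cdot\,]$ are simply undefined. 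The paper never splits $P$ off from $D_1$ or $D_2$. It arranges $P\mid D_2$, removes $P$ from $M$ using the trivial symbol $[D_1,D_2,-D_1D_2]=1$ of \cref{prop: rivial special symbol} (replacing $M$ by $-D_1D_2/M$, which is coprime to $P$), and only then applies reciprocity once to obtain $[D_1,M,D_2]$, a symbol whose first two arguments are $P$-independent, so that the governing extension $F_{D_1,M}$ is one fixed field and the entire $P$-dependence is a Frobenius condition in it. That maneuver is what makes your type~(iii) symbol $[P,P,D_2]$ never appear; in your version it does appear, and your treatment of it (``a Kummer-theoretic argument in the spirit of quartic reciprocity'') is a placeholder for the hardest part of the argument rather than a proof.

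The second gap concerns your type~(ii) symbols $[X,Y,P]$, for which you assert that $F_{X,Y}$ is a fixed extension depending only on $D$. That is precisely what \cref{def minimal ramification over C} does not guarantee: in its second case a twist by an auxiliary prime $Q$ satisfying \cref{eq: extra ramification requirement} may be required, and that condition demands $\Leg{Q}{R}=1$ for all primes $R$ dividing the product of all three arguments --- including $P$ --- so the twisted field can vary with $P$. The paper devotes \cref{claim: no exception case} to showing that, after changing the basis of $C[2]$ so that every norm $M$ has odd degree (possible exactly because $D$ has an irreducible factor of odd degree), the triples $(D_1,M,D_2)$ always land in the unramified case and no twist is needed. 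Your proposal invokes the odd-degree hypothesis only in passing and does not carry out this step, so the fields you composite to form $\Omega_8(D)$ are not shown to be independent of $P$.
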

\begin{proof}
	We have already seen that $\Omega_4(D) = \F_q(x)(\sqrt{g}, \{\sqrt{P_i}: P_i\mid D\})$ is a governing field for the $4$-rank of $C(DP)$. Denote it by $\Omega_{4,D}$. Now suppose that $P$ and $P'$ are primes that are unramified in $\Omega_4(D)$ and have the same Artin symbol in $\Gal(\Omega_4(D)/\F_q(x))$. If we number the primes in $DP$ and $DP'$ in the obvious compatible way, the R\'edei matrices $R_4$ and $R_4'$ as given in \cref{eq: Redei map} will coincide. To calculate the $8$-rank via \cref{eq: 8-rank map}, we can choose compatible bases and compare $R_8$ and $R_8'$ by each entry. 
	
	An entry in $R_8$ is determined the Artin symbol $\Art{\m}{E(\beta)}{K} \in \Gal(F/E)$ of some ideal  $\m$ of norm $M$ and a quartic extension $F=E(\sqrt{\beta})/K$ belonging to a splitting $DP = D_1D_2$ of the second type. We would like to write this entry of $R_8$ as a R\'edei symbol $[D_1, D_2, M]$. 
	As we excluded the possibility of an irregular ambiguous ideal, we can take $M$ to divide $DP$. At most one of $D_1$ and $D_2$ is of odd degree, because $DP=D_1D_2$ is a splitting of the second type. Hence the triple $(D_1, D_2, M)$ will satisfy condition \ref{redei cond 1}. By \cref{lem: decomposition 2nd type ffields} all Hilbert symbols $(D_1, D_2)_P$ are trivial. The fact that $\m$ is a generator of $\ker(R_4)$ implies that it is a square in the class group. Thus there exists an ideal $\mathfrak{I}$ in $k(\sqrt{D})$ of some norm $I \in k^*$ such that $\mathfrak{I}^2 \m = (Z)$ is principal with some generator $Z = A+B\sqrt{D}\in O_{k(\sqrt{D})}$. Taking norms of these ideals in $k$, it follows that $I^2M = A^2-DB^2$ which means that $M$ is a norm in $k(\sqrt{D})$. In particular the Hilbert symbols $(D, M)_{P} = (D_1, M)_P(D_2, M)_P$ are trivial for all $P \leq \infty$, giving the terms on the right hand side the same value. We still need to show that both are trivial.
	
	Consider a finite prime $P$. Whenever $P\nmid M$, we can take the $D_i$ that is not divisible by $P$ (because $\gcd(D_1, D_2)=1$), making them both $P$-units and $(D_i, M)_P=1$. When $P$ divides $M$, $P$ must split either in $k_{D_1}$ or $k_{D_2}$ because it is a decomposition of the second type, implying that at least one of $(D_1, M)_P$ and $(D_2, M)_P$ is $1$. Hence $(D_1, M)_P = (D_2,M)_P=1$ for all finite primes $P$. The Hilbert symbol at infinity follows from the product formula \cite[theorem XII.4.13]{Artin-Tate}. We conclude that $[D_1, D_2, M]$ is a well-defined R\'edei symbol. The same reasoning holds for the entries of $R_8'$.
	
	By possibly switching $D_1$ and $D_2$, we may assume that $P$ divides $D_2$. When $P$ also divides $M$ we can use the trivial symbol $[D_1, D_2, -D_1D_2]$ from \cref{prop: rivial special symbol} and additivity of the R\'edei symbol to write
	\[[D_1, D_2, M] = [D_1, D_2, 1/M] = [D_1, D_2, -D_1D_2/M],\]
	where $P$ does not divide $-D_1D_2/M$. All entries in $R_8$ can thus be written as $[D_1, D_2, M]$ where $P\nmid D_1M$. The entries of $R_8'$ have become $[D_1', D_2',M']=[D_1, D_2', M]$, where $P'\nmid D_1M$.
	
	After this reduction we can apply \cref{thm: Redei reciprocity} and rewrite the entries of $R_8$ as
	\[[D_1, D_2, M] = [D_1, M, D_2].\]
	
	The triple $(D_1,M,D_2/P)$ still satisfies the conditions of \cref{def minimal ramification over C}, so there exists an extension $F_{D_1,M}$ that is minimally ramified over $E_{D_1, M} = k(\sqrt{M}, \sqrt{D_1})$ with respect to $D_2/P$. In fact, we claim that the extension $F_{D_1, M}$ will always be in case 1 of \cref{def minimal ramification over C}.	
	\begin{claim}\label{claim: no exception case}
		The extension $E_{D_1,M} \subset F_{D_1,M}$ is unramified outside $\gcd_{\infty}(D_1, M).$
	\end{claim}
	Writing $F_{D_1, M} = E_{D_1, M}(\beta_{D_1, M})$, the result is that $[D_1, M, D_2]=[D_1, M, D_2']$ whenever $P$ and $P'$ have the same Artin symbol in $\Gal(\Omega_{4,d}(\sqrt{\beta_{D_1, M}}, \sqrt{Q})/\Omega_{4,D})$. Letting $\tilde{\Omega_8}(D)$ be the compositum of the fields $\Omega_{4,D}(\sqrt{\beta_{D_1, M}})$ ranging over all the entries of $R_8$ gives us the desired governing field.
\end{proof}

\begin{proof}[Proof of \cref{claim: no exception case}]
	For the triple $(D_1, M, D_2)$ we want to show that there exists a minimal ramified extension $F_{D_1, M}$ over $k(\sqrt{D_1}, \sqrt{M})$ with respect to $D_2$ that satisfies \cref{good case} of \cref{def minimal ramification over C}, i.e. unramified outside $\gcd_{\infty}(D_1, M).$ Let us recall what $D_1$ and $M$ actually can be. In the construction of the matrix $R_8$ we are free to choose a basis of $\ker(R_4)$ (representing classes in $C^2\cap C[2]$) and a basis of $\hat{C}[4]/\hat{C}[2]$ (which can be represented by quadratic characters via \cref{lem: decomposition 2nd type ffields}). We can therefore use an explicit description of the basis of $C[2]$ to narrow down the possibilities for $M$. Let $D = eQ_1\dots Q_{s_1}P_1\dots P_{s_2}$ be the decomposition of $D$ into irreducible parts with $s_1>0$ by assumption. In \cref{subsec: 2rank} we found that $C[2]$ can be generated by the classes of all finite primes dividing $DP$. We can therefore take $M$ to be in 
	\begin{equation}\label{eq: M options}
		M \in \{Q_1,\dots,Q_{s_1}, P_1 ,\dots, P_{s_2}\}\cup \{-D\},
	\end{equation}
	By making the changes $P_j \mapsto Q_1P_j$ for all $j$ and the extra change $P\mapsto Q_1P$ when $\deg(D)$ is even to the basis of $C[2]$, all the options for $M$ will have odd degree. In other words we may assume that 
	\begin{equation}\label{eq: M options}
		M \in \{Q_1,\dots,Q_{s_1},Q_1 P_1 ,\dots, Q_1P_{s_2}\}\cup \{-D \text{ or } -D/Q_1\}.
	\end{equation}
	 That implies the pair $(D,M)$ always lands in case 1 of \cref{def minimal ramification over C}.
\end{proof}

\paragraph{Further research}
We have shown that \cref{bad case} (the `bad case') of minimally ramified extensions as in \cref{def minimal ramification over C} cannot be avoided on certain occasions. A next step would be to give necessary and sufficient conditions when a triple $(A,B,C)$ lands in the bad case.

Furthermore, one may wonder whether governing fields for the $8$-rank exist when all irreducibe part of the discriminant have even degree. Then the $8$-rank is much harder to control by the existence of an irregular ambiguous ideal. We might compare this case to the 16-rank of certain quadratic number fields, as described by Milovic \cite{Milovic}. In both situations the discriminant can be written as a difference $D = A^2 -gB^2$ for some unit $g$. For number fields, Koymans and Milovic \cite{Koymans-Milovic} showed that governing fields for the 16-rank do not exist for such discriminants and their methods may be applied to function fields as well.
 
\paragraph{Funding} The author is supported by the Dutch Research Council (NWO) as part of the Quantum Software Consortium programme (project number 024.003.037) and the Quantum Delta NL programme (project number NGF.1623.23.023).
 
\paragraph{Acknowledgements}
Part of the results were obtained in the author's Master's thesis \cite{Stokvis}. I want to thank Gunther Cornelissen for his supervision and helpful suggestions during the project. I would also like to thank Peter Koymans and Jonathan Love for the useful discussions, and Peter Bruin for proofreading the document and useful suggestions.

\bibliographystyle{alphaurl}
\bibliography{bibfile}
\thispagestyle{plain}

\appendix

\section{Magma}\label{app: magma}
The following Magma code generates examples of minimally ramified extensions where the extra ramified prime from \cref{def: minimal ramification} is necessary. It generates one example for each prime $q\equiv 3\mod{4}$ smaller than $200$.
	
\begin{lstlisting}[breaklines, tabsize=4, basicstyle=\footnotesize]
//Input: q = field size, f and g make up the decomposition of the second type, 
//	C is a point on the conic (X^2=f*Y^2+g*Z^2).
//Output: Decomposition type of an infinite place in the extension F_q(t, sqrt(f),
//	sqrt(X-sqrt(f)*Y)) / F_q(t, sqrt(f)). We want this to be unramified.
ramification := function(q,f,g,C)
	R<t> := FunctionField(GF(q));
	P<y> := PolynomialRing(R);
	X:= Numerator(C[1]);
	Y:= Numerator(C[2]);
	FF1<alpha> := FunctionField(y^2-f);
	P := Places(R,1);
	inf:=P[1];
	
	//The infinite prime should split in F_q(t, sqrt(f)) / F_q(t)
	DecompositionType(FF1, inf);
	
	Q<z> :=PolynomialRing(FF1);
	FF2<beta>:= FunctionField(z^2 - (X+alpha*Y));
	P := Places(FF1, 1);
	inf:=P[1];
	return (DecompositionType(FF2, inf));
end function;
	
for q in [3..200] do
	if q mod 4 eq 3 and IsPrimePower(q) then 
		q;
		R<t>:= RationalFunctionField(GF(q));
		S<u>:=PolynomialRing(R);
		P<x,y,z> := ProjectiveSpace(R, 2);
		F<t>:=PolynomialRing(GF(q));
		f:= RandomIrreduciblePolynomial(GF(q),4);
		for g in AllIrreduciblePolynomials(GF(q), 2) do 
		C := Conic(P, x^2 -f*y^2 - g*z^2);
		if HasRationalPoint(C) then
			RP:=RationalPoint(C);
			if Degree(Numerator(RP[1])) mod 2 eq 1 then
				f,g;
				RP;
				ramification(q,f,g, RP);
				clgroup(q,f,g);
				break;
			end if;
		end if;
		end for;
	end if;
end for;
\end{lstlisting}
\end{document}